\newtheorem{lemma}{Lemma}
\newtheorem{definition}{Definition}
\newtheorem{theorem}{Theorem}
\newtheorem{cor}{Corollary}
\newcommand{\cC}{\mathcal{C}}
\newcommand{\bC}{\mathbf{C}}
\newcommand{\malg}{\textsc{mrank}}
\DeclareMathOperator{\diag}{diag}
\DeclareMathOperator{\urank}{urank}
\DeclareMathOperator{\mrank}{mrank}
\DeclareMathOperator{\rrank}{rrank}
\DeclareMathOperator{\vdm}{VDM}
\DeclareMathOperator{\nul}{null}
\DeclareMathOperator{\rank}{rank}
\newcommand{\vertiii}[1]{{\left\vert\kern-0.25ex\left\vert\kern-0.25ex\left\vert #1 
    \right\vert\kern-0.25ex\right\vert\kern-0.25ex\right\vert}}
\title{Recurrence Ranks and Moment Sequences}
\author{Joshua Cooper and Grant Fickes}
\date{\today}
\begin{document}

\maketitle

\begin{abstract}
    We introduce the ``moment rank'' and ``unitary rank'' of numerical sequences, close relatives of linear-recursive order.  We show that both parameters can be characterized by a broad set of criteria involving moments of measures, types of recurrence relations, Hankel matrix factorizations, Waring rank, analytic properties of generating functions, and algebraic properties of polynomial ideals.  In the process, we solve the ``complex finite-atomic'' and ``integral finite-atomic'' moment problems: which sequences arise as the moments of a finite-atomic complex-/integer-valued measures on $\mathbb{C}$?
\end{abstract}

\section{Introduction}

\textit{This work is dedicated to the memory of Ron Graham, a gentle giant of the highest scholarly caliber, a peerless and playful teacher, and an extraordinarily generous person. His scientific contributions will live on in innumerable ways, particularly in his endless demonstrations that theory and application are not just complementary, but profoundly interwoven. Here we invoke three persistent themes of his work: recurrence relations, algorithmic thinking, and expansive TFAE statements.\\}  

We begin with a motivating problem, the original impetus for this work: Suppose that $G$ is a finite, simple graph or hypergraph; then $G$ is associated with a {\it characteristic polynomial} whose roots are its adjacency eigenvalues.  This polynomial, despite substantial attention in the literature dating back to at least \cite{CoSi57} from 1957, still holds many mysteries.  One example is the question of describing the multiplicity of zero as a root.  If $f(x)$ is the polynomial, then this multiplicity is the largest $m$ so that $g(x) = f(x)/x^m$ is also a polynomial; thus, the degree of $g(x)$ (or, actually, its reciprocal polynomial $\overline{g}(x) = x^{\deg g}g(1/x)$ in this application) then encodes this quantity as $m = \deg f - \deg g$.  Note that 
$$
\log \overline{g}(x) - \log g(0) = \sum_{i=1}^r \log(1-b_i x) = \sum_{i=1}^r \sum_{j=1}^\infty \frac{b_i^j x^j}{j}
$$
where $\{b_i\}_{i =1}^{\deg g}$ are the roots of $g$, whereupon the question becomes of bounding the smallest $r$ so that $c_j$ can be written as a sum of $r$ $j$-th powers, where $j c_j$ is the $j$-th coefficient of $\log \overline{g}$.  As will be defined below, this is exactly the ``unitary rank'' of the sequence $(jc_j)_{j \geq 1}$.  See condition (6) in Theorem \ref{Thm:TFAEforDC} for this connection with log-polynomial degree.

More generally, one might ask for the simplest recurrence that a combinatorial sequence $\mathcal{C}$ satisfies, as a kind of measure of complexity.  If $\mathcal{C}$ is ``C-finite'', then it satisfies a linear recurrence with constant coefficients, and the order of that recurrence captures this complexity.  It is natural, then, to ask how to compute this order, or even if it is finite.  Famously, for example, this is an open question for the sequence $A_n$ equal to the number of permutations of $n$ with no $1324$ pattern (i.e., $\sigma \in S_n$ so that there exist no $a<b<c<d$ so that $\sigma(a) < \sigma(c) < \sigma(b) < \sigma(d)$).  The theory of such sequences is extremely well-trodden territory, and it is simplest in the case that the characteristic polynomial -- the polynomial whose coefficients are the same as those of the recurrence -- has no repeated roots.  We focus on this case presently.

Another way in which the smallest order of a linear recurrence satisfied by a sequence appears in the literature is in the context of the venerable ``moment problem''.  Here, one asks whether a sequence can arise as the sequence of moments of various kinds of distributions: important instances include (positive) measures on $\mathbb{R}$, $[0,\infty)$, $[0,1]$, or $\mathbb{T} = \exp(i\mathbb{R})$ (the ``Hamburger'', ``Stieltjes'', ``Hausdorff'', and ``Toeplitz''/``trigonometric'' moment problems, respectively); signed measures on $\mathbb{R}$ (already considered by Hausdorff \cite{Ha23}); or atomic measures \cite{CurFiaMol2008,DioSch2017}.  See \cite{Sch2017} for an extensive exploration of this old and very broad topic.  Indeed, the question of when a sequence does {\em not} arise as moments of a finite-atomic measure was recently addressed \cite{BosElvGutMai2020}, a topic with a long history connected with totally positive matrices  \cite{FomZel2000,Pin2010}, strong log-concavity/unimodality \cite{Bre89,ChuLin15}, continued fractions and Pad\'{e} approximants \cite{ShoTam1943}.  Here, we add to the literature on moment problems by addressing the case of the underlying space being $\mathbb{C}$ with the two conditions that either (1) the measures are complex-valued and finite-atomic, or (2) the (positive) measures are finite-atomic with integer masses.

Yet another large constellation of topics closely connected with recurrence rank is the theory of Hankel matrices \cite{Lay2001,Wid1966}, matrices which are constant on anti-diagonals. These matrices -- and, more generally, Hankel operators -- play an important role in combinatorial sequence transforms \cite{Fre07,Lay2001}, numerical methods in signal processing \cite{ChuLin15}, and Riordan arrays \cite{PeaWoa00}.  The determinants of Hankel matrices, known as ``catalecticants'', are objects of study going back as far as Sylvester's work in the 1850s \cite{Syl1851}, and lives on in invariant theory \cite{Stu08}, polynomial positivity \cite{Ble13}, Waring rank and binary forms \cite{Rez2013}, and the theory of orthogonal polynomials \cite{Jun2003}.

Clearly, the subject matters connected with linear recurrence order are vast, and there is not space here to discuss them all (and many important references are therefore omitted, although they can be found by following threads in the aforementioned references).  Indeed, so much work has been done on related topics over such a long period of time that it is difficult to trace their history.  This work, in addition to presenting several new results, is an attempt to relate and unify these perspectives into one focused on the matter of linear recurrence rank, the order of the shortest recurrence a sequence satisfies, in the particularly interesting cases we term ``moment rank'' and ``unitary rank.''  We attempt to keep the below exposition mostly self-contained, which entails borrowing a variety of arguments from the literature, indicated whenever possible.

In the next section, we introduce notation, definitions, and state some basic results.  In Section 3, we present our first main theorem, a wide-ranging TFAE statement about moment rank, and discuss a few consequences.  In Section 4, we present our second main theorem, another TFAE statement about unitary rank, and some consequences thereof.

\section{Preliminaries}

Suppose the sequence $\mathcal{C} = (c_n)_{n=0}^\infty$ satisfies an $r$-th order linear recurrence relation
\begin{equation} \label{eq:recurrence}
\sum_{n=0}^{r} a_n c_{n} = 0
\end{equation}
Then, by classical results \cite{EvePooShpWar2003}, the elements of $\mathcal{C}$ can be expressed as
$$
c_n = \sum_{i=1}^r \alpha_i \beta_i^r
$$
for some $\{\alpha_i\}_{i=1}^r$, where $\{\beta_i\}_{i=1}^r$ are the roots of the degree-$r$ polynomial $p(x) = a_0 \prod_{i=1}^r (x-\beta_i) = \sum_{i=0}^{r-1} a_i x^r $, as long as the $\beta_i$ are distinct.  The $\{\alpha_i\}_{i=1}^r$ can be obtained by solving the linear system
\begin{equation} \label{eq:linsys}
\forall j \in \{0,\ldots,r-1\}, \quad \sum_{i=1}^r \alpha_i \beta_i^{j+1} = c_j
\end{equation}
These observations motivate the following definition. 

\begin{definition}\label{Def:rank}
The sequence $\mathcal{C} = (c_n)_{n=0}^N$ (with $N = \infty$ allowed) is said to have \textbf{recurrence rank} $r$ if $r$ is the smallest positive integer so that $\mathcal{C}$ satisfies a linear recurrence of order $r$. If $N = \infty$, we write $\rrank(\mathcal{C})$ for the recurrence rank.
\end{definition}

\begin{definition}\label{Def:mrank}
The sequence $\mathcal{C} = (c_n)_{n=0}^N$ (with $N = \infty$ allowed) is said to have \textbf{moment rank} $r$ if $r$ is the smallest positive integer so that there exists a set of nonzero complex numbers $\{\alpha_i\}_{i=1}^r$ and distinct nonzero $\{\beta_i\}_{i=1}^r$ so that $c_n=\sum_{i=1}^r \alpha_i \beta_i^{n+1}$ for all $0 \leq n \leq N$. If $N = \infty$, we write $\mrank(\mathcal{C})$ for the moment rank.
\end{definition}

Use of this definition depends on the uniqueness of the quantity for a given sequence. This motivates the following lemma. 

\begin{lemma} \label{lem:GDCisMeaningful}
$\mrank((c_n)_{n \geq 0})$ is well-defined.
\end{lemma}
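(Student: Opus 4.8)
The plan is to prove the stronger statement that, for a fixed sequence $(c_n)_{n \geq 0}$, any representation of the form appearing in Definition~\ref{Def:mrank} is unique up to reindexing; well-definedness of the minimizing integer $r$ is then immediate, since every valid $r$ equals this common value. (One could alternatively phrase the argument via uniqueness of the partial-fraction decomposition of the rational generating function $\sum_{n\geq 0} c_n x^n = \sum_i \frac{\alpha_i\beta_i}{1-\beta_i x}$, but the elementary Vandermonde argument below is self-contained.)

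First I would suppose two such representations are given, say $c_n = \sum_{i=1}^r \alpha_i \beta_i^{\,n+1} = \sum_{j=1}^s \gamma_j \delta_j^{\,n+1}$ for all $n \geq 0$, where the $\alpha_i$ and $\gamma_j$ are nonzero, the $\beta_i$ are distinct and nonzero, and the $\delta_j$ are distinct and nonzero. I would collect the combined set of bases $\Lambda = \{\beta_1,\ldots,\beta_r\} \cup \{\delta_1,\ldots,\delta_s\}$, a set of $t \leq r+s$ distinct nonzero complex numbers, and for each $\lambda \in \Lambda$ set $\mu_\lambda = \sum_{i : \beta_i = \lambda} \alpha_i - \sum_{j : \delta_j = \lambda} \gamma_j$. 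Subtracting the two expressions for $c_n$ yields $\sum_{\lambda \in \Lambda} \mu_\lambda \lambda^{\,n+1} = 0$ for every $n \geq 0$.

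Next I would restrict attention to the equations for $n = 0, 1, \ldots, t-1$. These form a linear system in the unknowns $(\mu_\lambda)_{\lambda \in \Lambda}$ whose coefficient matrix is $\big(\lambda^{\,n+1}\big)_{0 \leq n \leq t-1,\ \lambda \in \Lambda}$; its determinant equals $\big(\prod_{\lambda \in \Lambda}\lambda\big)$ times the Vandermonde determinant of the $\lambda$'s, and hence is nonzero because the $\lambda \in \Lambda$ are distinct and nonzero. Therefore $\mu_\lambda = 0$ for every $\lambda \in \Lambda$.

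Finally I would read off the conclusion. For each fixed $i$, since the $\beta_i$ are distinct the only contribution to $\mu_{\beta_i}$ from the first representation is $\alpha_i$, so $\alpha_i = \sum_{j : \delta_j = \beta_i} \gamma_j$; as $\alpha_i \neq 0$ this forces $\beta_i = \delta_{j(i)}$ for some $j(i)$, and then (the $\delta_j$ being distinct) $\alpha_i = \gamma_{j(i)}$. By the symmetric argument, $i \mapsto j(i)$ is a bijection $\{1,\ldots,r\} \to \{1,\ldots,s\}$ matching each pair $(\alpha_i,\beta_i)$ with $(\gamma_{j(i)},\delta_{j(i)})$; in particular $r = s$. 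Thus any two valid representations have the same length, so the ``smallest $r$'' in Definition~\ref{Def:mrank} is the unique such $r$, and $\mrank((c_n)_{n\geq 0})$ is well-defined. There is no genuine obstacle in this argument; the only points needing care are the nonvanishing of the combined Vandermonde-type determinant and the bookkeeping showing each $\beta_i$ contributes a single nonzero amount to its $\mu$. The real role of the lemma is to license treating the $\beta_i$ (and the $\alpha_i$) as genuine invariants of the sequence in what follows.
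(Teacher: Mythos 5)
Your argument is correct, but it takes a genuinely different route from the paper. You subtract the two purported representations, gather the combined set $\Lambda$ of distinct nonzero bases, and kill all the coefficients $\mu_\lambda$ by inverting the $t \times t$ system coming from $n = 0,\ldots,t-1$, whose matrix is a column-scaled Vandermonde matrix with nonzero determinant because the bases are distinct and nonzero; the bookkeeping then matches each $(\alpha_i,\beta_i)$ with a unique $(\gamma_{j(i)},\delta_{j(i)})$ and forces $r = s$. The paper instead passes to the ordinary generating function, sums each representation to $\sum_i \alpha_i\beta_i/(1-\beta_i z)$ in a small disk, and identifies the two rational functions by matching their simple poles and residues. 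Your linear-algebra argument is more elementary and self-contained: it avoids any appeal to convergence, analytic continuation, or the pole/residue structure of rational functions, and it only uses the first $t \leq r+s$ terms of the sequence, so it in fact yields uniqueness of the representation for sufficiently long finite sequences as well, not just for $N=\infty$. The generating-function route, by contrast, fits the paper's broader theme (it is the same mechanism reused in Lemma \ref{lem:DCisMeaningful} and in conditions (5) and (8) of Theorem \ref{Thm:TFAEforGDC}), which is presumably why the authors chose it. Both proofs establish the stronger statement that the representation itself, not just the minimal $r$, is an invariant of the sequence.
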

\begin{proof}
Suppose, by way of contradiction, that there are two sets $\{\beta_i\}_{i=1}^r$ and $\{\beta'_j\}_{j=1}^s$ of nonzero distinct complex numbers along with sets of nonzero complex numbers $\{\alpha_i\}_{i=1}^r$ and $\{\alpha'_j\}_{j=1}^s$ so that  
$$
c_n=\sum_{i=1}^r\alpha_i \beta_i^{n+1}=\sum_{j=1}^s\alpha'_j \beta_j^{\prime(n+1)}
$$ 
Write $f(z)=\sum_{n=0}^\infty c_n z^n$. Because $\max_i|\beta_i|$ and $\max_j|\beta'_j|$ are finite, the following is true around a sufficiently small ball about $z=0$: 
\begin{align*} 
\sum_{n=0}^\infty \sum_{i=1}^r z^n\alpha_i \beta_i^{n+1} &= \sum_{n=0}^\infty \sum_{j=1}^s z^n \alpha'_j \beta_j^{\prime (n+1)} \\
\sum_{i=1}^r \sum_{n=0}^\infty z^n\alpha_i \beta_i^{n+1} &= \sum_{j=1}^s \sum_{n=0}^\infty z^n \alpha'_j \beta_j^{\prime (n+1)} \\
\sum_{i=1}^r \frac{\alpha_i \beta_i}{1-z\beta_i} &= \sum_{j=1}^s \frac{\alpha'_j \beta'_j}{1-z\beta'_j}
\end{align*}

These two functions are equal, so they have the same set of (simple) poles; thus, $\{\beta_i\}_i = \{\beta'_j\}_j$ and $r=s$.  Furthermore, since the residues of these poles are proportional to the multiplicity of the $\alpha_i \beta_i$ and $\alpha'_j \beta'_j$ values, we also have that $\{\alpha_i\}_{i =1}^r = \{\alpha'_j\}_{j =1}^s$.
\end{proof}

The beginning of this section motivates the moment rank definition by the satisfaction of a linear recurrence. The following definition introduces a specific kind of linear recurrence we consider throughout the work.

\begin{definition}
An $r$-th order linear recurrence of the form $\sum_{j=0}^r a_j c_{j+t} = 0$ satisfied by the sequence $(c_j)_{j \geq 0}$ for all $t\geq 0$ is said to be {\em simple} if the characteristic polynomial $\sum_{j=0}^r a_j x^j$ has distinct roots. 
\end{definition}

The characteristic polynomial of a recurrence provides a way to translate between polynomials and recurrences. Given a sequence which satisfies a linear recurrence, there are methods to define other recurrences of higher orders which the given sequence satisfies. We consider this idea in the context of characteristic polynomials, motivating the following definition and the lemma that follows. 

\begin{definition}
Given a complex sequence $\mathcal{C} = (c_n)_{n=0}^\infty$, let 
$$ 
R_{\mathcal{C}}= \left \{\sum_{j=0}^r a_j x^j : a_j\in\mathbb{C}\mbox{ and }\sum_{j=0}^r a_j c_{j+t} = 0 \mbox{ for all }t\geq 0 \right \}
$$
be the set of characteristic polynomials of arbitrary finite order linear recurrences the sequence $\mathcal{C}$ satisfies for all $t\geq 0$. We note that the zero polynomial is a trivial element of $R_{\mathcal{C}}$. 
\end{definition}

The set above is a subset of one variable polynomials with complex coefficients. We explore useful algebraic properties of this subset of $\mathbb{C}[x]$. 

\begin{lemma}
Given a complex sequence $\mathcal{C} = (c_n)_{n=0}^\infty$, the set $R_{\mathcal{C}}$ is an ideal in $\mathbb{C}[x]$. 
\end{lemma}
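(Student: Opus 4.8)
The plan is to verify the two defining properties of an ideal in $\mathbb{C}[x]$: that $R_{\mathcal{C}}$ is closed under addition, and that it absorbs multiplication by arbitrary polynomials. Since $\mathbb{C}[x]$ is a principal ideal domain (indeed commutative with unity), it suffices to check these two closure conditions together with nonemptiness, and the latter is immediate because the zero polynomial lies in $R_{\mathcal{C}}$.

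First I would handle additive closure. Suppose $p(x) = \sum_{j=0}^r a_j x^j$ and $q(x) = \sum_{j=0}^s b_j x^j$ both lie in $R_{\mathcal{C}}$; padding the shorter coefficient vector with zeros, we may assume $r = s$. Then for every $t \geq 0$ we have $\sum_{j=0}^r (a_j + b_j) c_{j+t} = \sum_{j=0}^r a_j c_{j+t} + \sum_{j=0}^r b_j c_{j+t} = 0 + 0 = 0$, so $p + q \in R_{\mathcal{C}}$. (Closure under negation/scalar multiplication is the special case, or follows from the absorption property below applied to constant polynomials.)

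Next I would handle absorption: given $p(x) = \sum_{j=0}^r a_j x^j \in R_{\mathcal{C}}$ and an arbitrary $h(x) = \sum_{k=0}^m h_k x^k \in \mathbb{C}[x]$, I claim $h(x) p(x) \in R_{\mathcal{C}}$. By additive closure it is enough to treat $h(x) = x^k$ a single monomial, since $hp = \sum_k h_k (x^k p)$ and the general case then follows from the additive-closure step just established. For $h(x) = x^k$, the polynomial $x^k p(x)$ has coefficient sequence which is that of $p$ shifted up by $k$, so the corresponding recurrence condition at shift $t$ reads $\sum_{j=0}^{r} a_j c_{j + k + t} = 0$; but this is exactly the recurrence for $p$ evaluated at shift $t' = t + k \geq 0$, which holds by hypothesis since $p \in R_{\mathcal{C}}$ requires the relation for \emph{all} nonnegative shifts. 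Hence $x^k p \in R_{\mathcal{C}}$, and assembling these gives $h p \in R_{\mathcal{C}}$.

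I do not expect a genuine obstacle here; the only subtlety worth flagging is the "for all $t \geq 0$" quantifier in the definition of $R_{\mathcal{C}}$ — it is precisely this uniform quantification (rather than fixing a single $t$) that makes the multiplication-by-$x$ step go through, since multiplying the characteristic polynomial by $x$ corresponds to re-indexing the recurrence, and one needs the original relation to hold at the shifted index. With that observed, the verification is routine. I would close by remarking that, since $R_{\mathcal{C}}$ is an ideal of $\mathbb{C}[x]$, it is principal, generated by a single polynomial (the zero polynomial if $\mathcal{C}$ satisfies no nontrivial recurrence, i.e.\ is not C-finite, and otherwise by a monic generator of minimal degree), which sets up the connection to recurrence rank developed subsequently.
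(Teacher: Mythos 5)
Your proof is correct, and it takes a genuinely different route from the paper on the absorption step. You prove that $h(x)p(x) \in R_{\mathcal{C}}$ by reducing to monomials and observing that multiplying the characteristic polynomial by $x^k$ merely re-indexes the recurrence, i.e.\ the relation for $x^k p$ at shift $t$ is the relation for $p$ at shift $t+k$, which is available precisely because membership in $R_{\mathcal{C}}$ demands the relation for \emph{all} $t \geq 0$ --- the exact subtlety you flag. The paper instead argues through the ordinary generating function $\Phi(x)$: if $d \in R_{\mathcal{C}}$ then $\Phi$ is rational with denominator governed by $d$, so multiplying $d$ by any polynomial $p$ still clears the denominator and hence $pd$ is again the characteristic polynomial of a recurrence satisfied by $\mathcal{C}$ (the paper's wording even has a small slip, asserting that $p(x)d(x)\Phi(x)$, rather than $p(x)d(x)$, is the characteristic polynomial, and it quietly elides the reciprocal-polynomial bookkeeping in the recurrence/denominator correspondence). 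Your index-shift argument is more elementary and entirely formal: it needs no statement about rationality of $\Phi$, no convergence or formal-power-series apparatus, and no appeal to the external fact relating characteristic polynomials to denominators; the paper's route, on the other hand, foreshadows the generating-function characterization it uses later (e.g.\ in the $5 \Rightarrow 7$ step of Theorem \ref{Thm:TFAEforGDC}). Both are valid; yours is arguably the cleaner proof of this particular lemma.
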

\begin{proof}
We know the zero polynomial is in $R_{\mathcal{C}}$ by definition. Let $a(x) = \sum_{i=0}^r a_i x^i$ and $b(x) = \sum_{j=0}^s b_j x^j$ be arbitrary elements of $R_{\mathcal{C}}$. Then $\sum_{i=0}^r a_i c_{i+t} = 0$ and $\sum_{j=0}^s b_j c_{j+t} = 0$, giving that 
$$
\sum_{i=0}^r a_i c_{i+t} + \sum_{j=0}^s b_j c_{j+t} = 0. 
$$
Thus, $R_{\mathcal{C}}$ is closed under addition.

Now let $\{d_k\}_{k=0}^q$ be complex constants so that $d(x)\in R_{\mathcal{C}}$ where $d(x) = \sum_{k=0}^q d_k x^k$. Let $p(x) = \sum_{l=0}^m p_l x^l$ be an arbitrary polynomial with complex coefficients. If $d(x)$ or $p(x)$ is the zero polynomial, then $p(x)\cdot d(x) \in R_{\mathcal{C}}$. Suppose now that $d(x)$ and $p(x)$ are not identically zero. Since the sequence $\mathcal{C}$ satisfies a recurrence with characteristic polynomial $d(x)$, the generating function $\Phi(x)$ of the sequence has denominator $d(x)$. Therefore $p(x)\cdot d(x)\cdot \Phi(x)$ is a polynomial, so $p(x)\cdot d(x)\cdot \Phi(x)$ is the characteristic polynomial for a recurrence satisfied by $\mathcal{C}$, giving that $R_{\mathcal{C}}$ is closed under multiplication by elements of $\mathbb{C}[x]$.
\end{proof}

Note that since $\mathbb{C}[x]$ is a principal ideal domain, $R_{\mathcal{C}}$ is generated by one complex polynomial. Moreover, we call $R_{\mathcal{C}}$ the {\em recurrence ideal} of the sequence $\mathcal{C}$. 

\begin{cor}\label{Cor:SimpleGenerator}
Given a complex sequence $\mathcal{C} = (c_n)_{n=0}^\infty$ let $R_{\mathcal{C}}$ be generated by $p(x)$. If $p(x)$ has repeated roots, then $\mathcal{C}$ does not satisfy a simple linear recurrence of any order. Thus, $\mathcal{C}$ satisfying a simple linear recurrence implies that $p(x)$ has distinct roots, i.e., if $\mrank(\mathcal{C}) <\infty$, then $\rank(\mathcal{C}) = \mrank(\mathcal{C})$. 
\end{cor}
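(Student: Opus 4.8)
The plan is to leverage the principality of the recurrence ideal: if $R_{\mathcal{C}} = (p)$, then the characteristic polynomial of \emph{any} linear recurrence satisfied by $\mathcal{C}$ is a multiple of the generator $p$. I would establish the first assertion by contraposition. Suppose $\mathcal{C}$ satisfies some simple recurrence, with characteristic polynomial $g$; by definition $g$ is a nonzero squarefree polynomial (i.e., has distinct roots). Since $g \in R_{\mathcal{C}} = (p)$, we may write $g = q\,p$ with $p, q \neq 0$, so every root of $p$ is a root of $g$ of at least the same multiplicity. A repeated root of $p$ would therefore be a repeated root of $g$, contradicting simplicity of $g$; hence $p$ must be squarefree whenever $\mathcal{C}$ admits \emph{any} simple recurrence. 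This is exactly the contrapositive of ``if $p$ has repeated roots then $\mathcal{C}$ satisfies no simple recurrence of any order.''

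For the displayed consequence, assume $\mrank(\mathcal{C}) = r < \infty$, so $c_n = \sum_{i=1}^r \alpha_i \beta_i^{n+1}$ for all $n$, with all $\alpha_i \neq 0$ and the $\beta_i$ distinct and nonzero. Writing $\prod_{i=1}^r (x - \beta_i) = \sum_{j=0}^r a_j x^j$, a one-line check ($\sum_j a_j c_{j+t} = \sum_i \alpha_i \beta_i^{t+1} \sum_j a_j \beta_i^{j} = 0$, since each $\beta_i$ is a root of $\sum_j a_j x^j$) shows $\mathcal{C}$ satisfies this recurrence, which is simple. By the first part, $p$ is squarefree; and since $\prod_i (x-\beta_i)$ is a nonzero element of $R_{\mathcal{C}}$, it is a multiple of $p$, so $\deg p \le r$ and moreover every root of $p$ is one of the $\beta_i$ (in particular $p(0) \neq 0$). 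As $p$ itself realizes a recurrence of order $\deg p$ while every nonzero element of $R_{\mathcal{C}}$ has degree at least $\deg p$, this gives $\rrank(\mathcal{C}) = \deg p \le r = \mrank(\mathcal{C})$.

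It remains to prove $\mrank(\mathcal{C}) \le \rrank(\mathcal{C}) = \deg p =: s$, for which I would apply the decomposition recalled before Definition~\ref{Def:rank} to the generator $p$ itself: $p$ is squarefree with $s$ distinct \emph{nonzero} roots $\gamma_1, \dots, \gamma_s$ and lies in $R_{\mathcal{C}}$, so the linear system~\eqref{eq:linsys} has a (unique) solution $\delta_1, \dots, \delta_s$ with $c_n = \sum_{k=1}^s \delta_k \gamma_k^{n+1}$ for all $n \ge 0$. Discarding the terms with $\delta_k = 0$ leaves a representation of exactly the form required in Definition~\ref{Def:mrank}, with at most $s$ summands, so $\mrank(\mathcal{C}) \le s$; combined with the previous paragraph, $\rrank(\mathcal{C}) = \mrank(\mathcal{C})$. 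The only genuine subtlety — the step I would be most careful about — is the possibility that $0$ is a root of $p$: the representation in Definition~\ref{Def:mrank} requires nonzero bases (equivalently, \eqref{eq:linsys} becomes singular once some root vanishes), so the argument of this last paragraph would break down — but this is precisely ruled out by the standing hypothesis $\mrank(\mathcal{C}) < \infty$, which forced $p(0) \neq 0$ above. (One can also sidestep the cited decomposition entirely: if $\sum_j b_j c_{j+t} = 0$ for all $t$, the same interchange yields $\sum_i \alpha_i g(\beta_i) \beta_i^{t+1} = 0$ for all $t$, and linear independence of the geometric sequences $(\beta_i^{t+1})_{t\ge 0}$ forces $g(\beta_i) = 0$ for every $i$; hence $\prod_i(x-\beta_i)$ divides every element of $R_{\mathcal{C}}$ and therefore generates it, yielding $\deg p = r$ immediately.)
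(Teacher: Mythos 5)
Your proof is correct and follows essentially the route the paper intends: the paper states this corollary without an explicit proof, treating it as immediate from the principality of $R_{\mathcal{C}}$ (any characteristic polynomial of a recurrence is a multiple of the generator $p$, so a repeated root of $p$ propagates to every element of the ideal), which is exactly your first paragraph. Your remaining paragraphs merely fill in, carefully and correctly, the $\rrank(\mathcal{C}) = \mrank(\mathcal{C})$ details (including the nonzero-root subtlety) that the paper leaves implicit and later absorbs into the $1 \Leftrightarrow 2$ equivalence of Theorem \ref{Thm:TFAEforGDC}.
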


The algebraic structure of the recurrence ideal gives rise to useful properties of simple linear recurrences, some of which are investigated by Lemma \ref{Lem:SimpleHelp}. The properties addressed in the following two lemmas are useful in the proof of Theorem \ref{Thm:TFAEforGDC}.

\begin{lemma}\label{Lem:SimpleHelp}
Let $r$ be the smallest positive integer so that the sequence $(c_j)_{j \geq 0}$ satisfies the simple $r$-th order linear recurrence $\sum_{j=0}^r a_j c_{j+t} = 0$. Then the following observations hold. 
\begin{enumerate}
\item $a_r\neq 0$. 
\item The roots of $p(x) = \sum_{j=0}^r a_j x^{j}$ are non-zero.
\item The polynomial $q(x) = \sum_{j=0}^r a_j x^{r-j}$ (the ``reciprocal" of the characteristic polynomial) has $r$ distinct, nonzero roots. 
\end{enumerate}
\end{lemma}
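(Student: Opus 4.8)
The three claims are closely linked, so I would prove them in the order (1), then (2), then (3), with each building on the last. For (1), the idea is a minimality argument: if $a_r = 0$, then the same recurrence $\sum_{j=0}^{r-1} a_j c_{j+t} = 0$ holds for all $t \geq 0$, which would exhibit a recurrence of order at most $r-1$. One must be slightly careful: the resulting relation is simple only if its characteristic polynomial $\sum_{j=0}^{r-1} a_j x^j$ has distinct roots. But this polynomial is exactly $p(x)/$(nothing) — in fact it equals $p(x)$ with the top term removed, and since $a_r = 0$ we have $\sum_{j=0}^{r-1} a_j x^j = p(x)$ literally, which by hypothesis has distinct roots. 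Hence we would get a simple recurrence of order $\leq r-1$, contradicting minimality of $r$. (If $p$ were identically $a_{r-1}x^{r-1}+\dots$ with further vanishing leading coefficients, iterate.)

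For (2), I would argue again by contradiction and reduction of order. Suppose $0$ is a root of $p(x) = \sum_{j=0}^r a_j x^j$; equivalently $a_0 = 0$. Then $\sum_{j=1}^r a_j c_{j+t} = 0$ for all $t \geq 0$, i.e., setting $t' = t+1$, the sequence satisfies $\sum_{j=0}^{r-1} a_{j+1} c_{j+t'} = 0$ for all $t' \geq 1$. The subtlety here is the shift in the index range ($t' \geq 1$ rather than $t' \geq 0$); to conclude we either invoke that $(c_n)_{n\geq 0}$ and its shift $(c_{n+1})_{n \geq 0}$ have the same recurrence ideal up to the factor $x$, or we observe that the characteristic polynomial $\sum_{j=0}^{r-1} a_{j+1}x^j = p(x)/x$ lies in $R_{\mathcal{C}}$ (using that the generator of $R_{\mathcal{C}}$ divides $p(x)$, and $p(x)/x$ is still a polynomial). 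Since $p(x)/x$ has distinct nonzero roots (being $p$ with the factor $x$ removed, and $p$ has distinct roots), this is a simple recurrence of order $r-1$, again contradicting minimality.

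For (3), observe that $q(x) = x^r p(1/x) = \sum_{j=0}^r a_j x^{r-j}$ is the reciprocal polynomial of $p$. By part (1), $a_r \neq 0$, so $q$ has degree exactly $r$ (its leading coefficient is $a_0$, which is nonzero by part (2)), hence $r$ roots counted with multiplicity. The roots of $q$ are precisely the reciprocals $1/\beta$ of the roots $\beta$ of $p$; by part (2) every root $\beta$ of $p$ is nonzero so the reciprocals are well-defined and nonzero, and by simplicity the $\beta$ are distinct, hence so are the $1/\beta$. Therefore $q$ has $r$ distinct nonzero roots.

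The main obstacle is the index-shift bookkeeping in part (2): one must make sure that passing from "the recurrence holds for $t\geq 0$" to "it holds for $t \geq 1$" after factoring out $x$ does not secretly require information about $c_n$ that is missing, and that the reduced polynomial genuinely lies in $R_{\mathcal{C}}$. The cleanest route is to phrase everything in terms of the recurrence ideal $R_{\mathcal{C}}$ and divisibility of its principal generator, rather than manipulating the recurrences by hand.
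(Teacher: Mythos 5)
Your parts (1) and (3) are essentially the paper's own argument: (1) is the same minimality observation, and (3) deduces the claim from (1) and (2) via reciprocal roots exactly as in the paper (one small slip: it is $a_0\neq 0$, i.e.\ part (2), that makes $\deg q = r$, while $a_r\neq 0$ gives $q(0)\neq 0$; your parenthetical already has this right).

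The genuine gap is in part (2), and it sits exactly at the ``index-shift bookkeeping'' you flagged but did not close. From $a_0=0$ you correctly get $\sum_{j=0}^{r-1}a_{j+1}c_{j+t'}=0$ for all $t'\geq 1$; to contradict minimality you need it for $t'=0$ as well, i.e.\ you need $p(x)/x\in R_{\mathcal{C}}$, not merely membership in the recurrence ideal of the shifted sequence. Neither proposed repair delivers this: ``the generator $g$ of $R_{\mathcal{C}}$ divides $p$ and $p/x$ is a polynomial'' does not give $g\mid p/x$ unless $g(0)\neq 0$, which is precisely what is being proved; and the relation between $R_{\mathcal{C}}$ and the recurrence ideal of the shift only yields that the \emph{shifted} sequence satisfies the order-$(r-1)$ recurrence, not $\mathcal{C}$ itself at $t'=0$. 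The missing instance can genuinely fail: for $\mathcal{C}=(5,1,1,1,\dots)$ the recurrence $c_{t+2}-c_{t+1}=0$ holds for all $t\geq 0$ and is simple (characteristic polynomial $x^2-x$, distinct roots), there is no nontrivial first-order recurrence at all, and every second-order recurrence this sequence satisfies has $a_0=0$; so the conclusion $a_0\neq 0$ cannot be extracted from the stated hypotheses by ideal bookkeeping alone, and some further input (such as the moment-rank representation available in the surrounding equivalences of Theorem~\ref{Thm:TFAEforGDC}, or an added nondegeneracy assumption on $c_0$) is needed. For comparison, the paper's proof of (2) makes the same leap, asserting that dividing out $x$ ``corresponds to an index shift'' so that $p(x)/x$ is again the characteristic polynomial of a recurrence satisfied by $\mathcal{C}$; your write-up is essentially that argument with the problematic step made visible rather than resolved.
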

\begin{proof}
(1) - This follows trivially from the minimality of $r$. 

(2) - Due to Corollary \ref{Cor:SimpleGenerator}, $p(x)$ is the generator of the recurrence ideal, $R_{\mathcal{C}}$. If $x$ is a factor of $p(x)$, that corresponds to an index shift in the corresponding recurrence. Then $a(x) = p(x)/x$ is also the characteristic polynomial of a linear recurrence satisfied by $\mathcal{C}$, so $a(x)\in R_{\mathcal{C}}$, contradicting the minimality of $\deg(p)$.

(3) - Since $a_r\neq 0$ by (1), $q(x)$ has nonzero constant term so $q(0) \neq 0$. Therefore, the reciprocal of $q(x)$ is well defined, with $p(x)$ the reciprocal of $q(x)$. The polynomial $p(x)$ has $r$ distinct nonzero roots by (2) and simplicity, so the same can be said of $q$, whose roots are the reciprocals of the roots of $p$. 
\end{proof}

\begin{lemma}\label{Lem:UniqueRecurrence}
Let the sequence $\cC = (c_n)_{n \geq 0}$ satisfy two $r$-th order linear recurrences, namely the minimal order recurrence $\sum_{n=0}^r a_n c_{n+t} = 0$ and another $r$-th order recurrence $\sum_{n=0}^r b_n c_{n+t} = 0$, for all $t\geq 0$. Then $(a_1,a_2,\dotsc,a_r) = \lambda (b_1,b_2,\dotsc,b_r)$, where $\lambda \neq 0$ is a scalar.
\end{lemma}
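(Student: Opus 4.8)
The plan is to leverage the fact, just established, that $R_{\mathcal{C}}$ is a principal ideal of $\mathbb{C}[x]$. First I would pin down the degree of the characteristic polynomial $a(x) = \sum_{n=0}^r a_n x^n$ of the minimal recurrence: since a genuine $r$-th order recurrence is a nontrivial relation, $a(x) \neq 0$, so it has a well-defined leading index $d = \max\{n : a_n \neq 0\} \le r$; if $d < r$, then $\sum_{n=0}^d a_n c_{n+t} = 0$ would be a recurrence of order $d < r$, contradicting the minimality of $r$. Hence $\deg a = r$. Next I would show $a(x)$ itself generates $R_{\mathcal{C}}$. Let $p(x)$ be a generator. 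Any nonzero polynomial of degree $e$ lying in $R_{\mathcal{C}}$ is the characteristic polynomial of a recurrence of order $e$, so $e \ge r$ by minimality; applying this to $p$ gives $\deg p \ge r$. On the other hand $a \in R_{\mathcal{C}}$ forces $p \mid a$, so $\deg p \le \deg a = r$. Therefore $\deg p = r$ and $a = \mu\, p$ for some nonzero $\mu \in \mathbb{C}$, so $a(x)$ generates $R_{\mathcal{C}}$.

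Now bring in the second recurrence, whose characteristic polynomial $b(x) = \sum_{n=0}^r b_n x^n$ lies in $R_{\mathcal{C}}$ and, being a genuine $r$-th order recurrence, is nonzero with $\deg b \le r$. Since $a(x)$ generates $R_{\mathcal{C}}$, we have $a \mid b$, i.e. $b = s \cdot a$ for some $s \in \mathbb{C}[x]$; from $\deg b \le r = \deg a$ and $b \neq 0$ we conclude $\deg s = 0$, so $b(x) = \nu\, a(x)$ for a nonzero scalar $\nu$. Comparing coefficients yields $(b_0, b_1, \ldots, b_r) = \nu\,(a_0, a_1, \ldots, a_r)$, hence in particular $(a_1, a_2, \ldots, a_r) = \lambda\,(b_1, b_2, \ldots, b_r)$ with $\lambda = \nu^{-1} \neq 0$, as claimed.

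I do not expect a real obstacle here; the only points needing care are the degree bookkeeping in the first paragraph (one must use minimality of $r$ both to force $\deg a = r$ and to exclude lower-degree generators) and the convention that an "$r$-th order recurrence" denotes a nontrivial relation, so the associated polynomials are nonzero. As an alternative that sidesteps the ideal machinery altogether, one could argue directly: the admissible coefficient vectors form the kernel of the infinite matrix $\bigl(c_{n+t}\bigr)_{t \ge 0,\ 0 \le n \le r}$, and if $(a_n)$ and $(b_n)$ were linearly independent then $b' := a_r\,b - b_r\,a$ would be a nonzero kernel vector with vanishing last coordinate, producing a recurrence of order strictly less than $r$ and contradicting minimality; I would present whichever version meshes best with the surrounding exposition.
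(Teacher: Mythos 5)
Your argument is correct and follows essentially the same route as the paper: both identify the characteristic polynomials as elements of the principal recurrence ideal $R_{\mathcal{C}}$, use minimality of $r$ to see that $a(x)$ generates it, and conclude proportionality of the coefficient vectors from divisibility plus equality of degrees. Your write-up simply fills in the degree bookkeeping (that $a_r \neq 0$ and that no nonzero element of $R_{\mathcal{C}}$ has degree below $r$) that the paper's two-sentence proof leaves implicit, and the alternative kernel/elimination argument you sketch is a valid, more elementary substitute but not needed.
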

\begin{proof}
Let $a(x)$ be the characteristic polynomial of the recurrence $\sum_{n=0}^r a_n c_{n+t} = 0$. Since $r$ is minimal, we have that $a(x)$ generates $R_{\cC}$. If $b(x)$ is the characteristic polynomial of the recurrence $\sum_{n=0}^r b_n c_{n+t} = 0$, it must be that $a(x) = \lambda b(x)$ for some $\lambda\in\mathbb{C} \setminus \{0\}$ since $R_{\cC}$ is principal and $\deg(a)=\deg(b)$, from which the result follows. 
\end{proof}

We have already seen that the roots of characteristic polynomials associated with simple linear recurrences are distinct. 
The discriminant is a polynomial in the coefficients of univariate complex polynomials, whose kernel is exactly the set of polynomials with a repeated root.  This kernel is the ``discriminant variety''.

\begin{definition}
Fix the natural number $r\geq 1$. Then the (affine) \textbf{$r$-discriminant variety}, denoted $\nabla_r$, is the closure of 
$$
\left \{(b_0,\dotsc,b_{r}) \in \mathbb{C}^{r+1} : f(x) = \sum_{i=0}^r b_i x^i \mbox{ has a repeated root} \right \}
$$
\end{definition}

It is also common to consider a Hankel matrix whose entries are given by the elements of a sequence. Both finite and infinite dimensional square Hankel matrices are considered throughout the paper. In \cite{BolLukVan}, the authors show that all infinite Hankel matrices have generalized Vandermonde decompositions of a specified form, dependent on the recurrences the original sequence satisfies. Our investigation into simple linear recurrences invites the question of which additional matrix properties are satisfied by Hankel matrices generated by sequences satisfying simple linear recurrences. The following lemma and subsequent definition provide tools necessary to analyze the structure of these matrices. 

\begin{lemma}\label{Lem:VandeComp}
Let $V$ be an $r\times n$ Vandermonde matrix where the $(i,j)$ entry is $a_i^{j-1}$, and let $D$ be an $r\times r$ diagonal matrix with $(i,i)$ entry $b_i$. Take $a_i$ and $b_i$ for $1\leq i\leq r$ to be complex scalars. Then the matrix $V^TDV$ is a Hankel matrix. 
\end{lemma}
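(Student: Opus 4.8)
The plan is to compute the $(k,\ell)$ entry of $V^T D V$ directly and observe that it depends only on the sum $k+\ell$, which is precisely the defining property of a Hankel matrix. First I would write out the matrix product entrywise: since $V$ is $r \times n$ with $(i,j)$ entry $a_i^{j-1}$ and $D = \diag(b_1,\dotsc,b_r)$, the $(k,\ell)$ entry of $V^T D V$ is obtained by pairing the $k$-th column of $V$ (as a row of $V^T$) with $D$ and the $\ell$-th column of $V$. Carrying out the two summations collapses the diagonal matrix $D$ to a single index, giving
\begin{equation*}
\left( V^T D V \right)_{k,\ell} = \sum_{i=1}^r \left( V^T \right)_{k,i} \, b_i \, V_{i,\ell} = \sum_{i=1}^r a_i^{k-1} \, b_i \, a_i^{\ell-1} = \sum_{i=1}^r b_i \, a_i^{k+\ell-2}.
\end{equation*}

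The key observation is then immediate: the right-hand side depends on $k$ and $\ell$ only through $k+\ell$. Hence if $k+\ell = k'+\ell'$, the $(k,\ell)$ and $(k',\ell')$ entries of $V^T D V$ coincide, which is exactly the statement that $V^T D V$ is constant along anti-diagonals, i.e., a Hankel matrix. To present this cleanly I would define $h_m = \sum_{i=1}^r b_i a_i^m$ for $m \geq 0$ and note that $\left(V^T D V\right)_{k,\ell} = h_{k+\ell-2}$, so the matrix is the $n \times n$ Hankel matrix associated to the sequence $(h_m)_{m \geq 0}$ (which, incidentally, foreshadows the connection to moment sequences, since $h_m$ is a weighted power sum of the $a_i$).

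There is essentially no obstacle here — the entire content is the one-line entrywise computation above, and the only thing to be careful about is index bookkeeping (the off-by-one coming from the convention that the $(i,j)$ entry of $V$ is $a_i^{j-1}$ rather than $a_i^j$, so the exponent in $h_m$ is $k+\ell-2$ rather than $k+\ell$). I would keep the write-up to the displayed computation plus one sentence identifying the conclusion.
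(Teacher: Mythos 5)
Your proposal is correct and is essentially the same argument as the paper's: a direct computation showing the $(k,\ell)$ entry of $V^TDV$ equals $\sum_{i=1}^r b_i a_i^{k+\ell-2}$, hence depends only on $k+\ell$. The paper merely presents the same computation by displaying the intermediate matrices $V^TD$ and $V^TDV$ explicitly rather than in entrywise index notation.
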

\begin{proof}
We simply perform the matrix multiplication, showing the form of each product along the way. Let the matrices $D_{r\times r}$ and $V_{n\times r}$ be defined as follows, where $\{a_i\}_{i=1}^r$ and $\{b_i\}_{i=1}^r$ are complex scalars. Let $D_{r\times r} = \diag(b_i)$ and 
$$
V = \begin{pmatrix}
1 & a_1 & a_1^2 & \cdots & a_1^{n-1} \\
1 & a_2 & a_2^2 & \cdots & a_2^{n-1} \\
\vdots & \vdots & \vdots & \ddots & \vdots \\
1 & a_{r} & a_{r}^2 & \cdots  & a_r^{n-1}
\end{pmatrix}_.
$$
We see that  
$$
V^TD = \begin{pmatrix}
b_1 & b_2 & b_3 & \cdots & b_r \\
b_1 a_1 & b_2 a_2 & b_3 a_3 & \cdots & b_r a_r \\
\vdots & \vdots & \vdots & \ddots & \vdots \\
b_1 a_1^{n-1} & b_2 a_2^{n-1} & b_3 a_3^{n-1} & \cdots  & b_r a_r^{n-1}
\end{pmatrix}_,
$$
and furthermore that 
$$
V^TDV = \begin{pmatrix}
\sum_{i=1}^r b_i & \sum_{i=1}^r b_i a_i & \sum_{i=1}^r b_i a_i^2 & \cdots & \sum_{i=1}^r b_i a_i^{n-1} \\
\sum_{i=1}^r b_i a_i & \sum_{i=1}^r b_i a_i^2 & \sum_{i=1}^r b_i a_i^3 & \cdots & \sum_{i=1}^r b_i a_i^{n} \\
\vdots & \vdots & \vdots & \ddots & \vdots \\
\sum_{i=1}^r b_i a_i^{n-1} & \sum_{i=1}^r b_i a_i^n & \sum_{i=1}^r b_i a_i^{n+1} & \cdots & \sum_{i=1}^r b_i a_i^{2n-2} \\
\end{pmatrix}_.
$$
Then $c_j = \sum_{i=1}^r b_i a_i^j$ for $0\leq j\leq 2n-2$ is the sequence which populates the Hankel matrix $V^T D V$. 
\end{proof}

We are specifically interested in Vandermonde matrices with no zero entries, motivating the following definition. 

\begin{definition}\label{Def:no0Vande}
Let $H_{n\times n}$, where $n$ is allowed to be $\infty$, be a complex matrix. We say $H$ has a \textbf{non-degenerate Vandermonde decomposition} if there exists a Vandermonde matrix $V_{r\times n}$ with all entries nonzero and a diagonal matrix $D_{r\times r}$ so that $H_\infty = V^TDV$. 
\end{definition}

\section{Moment Rank}

Finally, before presenting the main theorem of this section, we describe an algorithm which returns the moment rank of a sequence and the coefficients of a linear combination of powers witnessing to this rank. Denote the $r \times r$ (modified) Vandermonde matrix with $(i,j)$ entry $\beta_i^{j}$ for $\boldsymbol{\beta} = (\beta_1,\ldots,\beta_r)$ by $\vdm'(\boldsymbol{\beta})$, and the $r \times r$ (ordinary) Vandermonde matrix with $(i,j)$ entry $\beta_i^{j-1}$ by $\vdm(\boldsymbol{\beta})$.\\

\noindent \textbf{Algorithm} $\malg(\mathcal{C})$: Given the sequence $\mathcal{C} = (c_n)_{n\geq 0}$, set $r=0$.  Then
\begin{enumerate}
    \item $r \leftarrow r+1$
    \item Let $\mathbf{c}_t = (c_t,\ldots,c_{t+r-1})$, and $\mathbf{C} = (\mathbf{c}_0^T, \ldots, \mathbf{c}_{r-1}^T)$.
    \item If $\det(\mathbf{C}) = 0$, \texttt{goto} step 1.  Else \texttt{continue}.
    \item Let $(a_0,\ldots,a_{r-1})^T = -\mathbf{C}^{-1} \mathbf{c}_r^T$, and define $p(x) = \sum_{n=0}^{r} a_n x^{n}$, where $a_r = 1$.
    \item If $(c_n)_{n \geq 0}$ does not satisfy the recurrence $\sum_{i=0}^{r} a_i c_{i+t} = 0$ for all $t \geq 0$, \texttt{goto} step 1.  Else \texttt{continue}.
    \item If $p$ has repeated roots, \texttt{throw} \texttt{ErrorNotSimple} and \texttt{terminate}.  Else \texttt{continue}.
    \item \texttt{return} $r$ and $\boldsymbol{\alpha} = (\mathbf{c}_0 \vdm'(\boldsymbol{\beta})^{-1})^T$, where $\boldsymbol{\beta}$ is the vector of roots of $p(x)$.
\end{enumerate}

Note that the above is in truth only a {\em template} for an actual executable algorithm, since some steps involve unspecified subroutines, such as computation of the determinant in Step 3, or checking for repeated roots in Step 6. Indeed, Step 5 involves checking whether a sequence is satisfied by a given recurrence, a task which could range from very straightforward (e.g., if the sequence was given as the solution to a linear recurrence) to undecidable (e.g., if the sequence is not a computable function of its index).  We therefore make no attempt to analyze the complexity of $\malg(\cdot)$ and instead treat constitutive subproblems as black boxes.  However, we do assume that each step is indeed computable in the sense that there exists an algorithm which will, in finite time, return \texttt{True} or \texttt{False} correctly.\\

The following is our main theorem concerning sequences of finite moment rank.  We show that all of the above contexts provide interpretations of the moment rank.  Recall that an $S$-measure on a space $(\mathcal{X},\Sigma)$, where $\Sigma$ is a $\sigma$-algebra on $\mathcal{X}$, is a countably additive function from $\Sigma$ to $S$, where $S$ is an additive monoid with limits such as $[0,\infty)$ (positive measure), $\mathbb{R}$ (signed measure), or $\mathbb{C}$ (complex measure). The $t$-th moment $m_t$ of an $S$-measure $d\mu$ on $\mathcal{X}$ is the quantity $\int_\mathcal{X} x^t \, d\mu$, and the sequence $\{m_t\}_{t =0}^\infty$ is its ``moment sequence''.  We call a measure ``$r$-atomic'' if there exists an $A \subset \mathcal{X}$ with $|A|=r$ so that $\mu(x) \neq 0$ for $x \in A$, and $\mu(B) = 0$ for any $B \subset \mathcal{X} \setminus A$ in $\Sigma$.

\begin{theorem}\label{Thm:TFAEforGDC}
Suppose $\mathcal{C} = (c_n)_{n=0}^\infty$ is a sequence in $\mathbb{C}$, and $r \in \mathbb{N}$. Let $H_{m,t}$ denote the $(m+1)\times(m+1)$ Hankel matrix whose entries come from the sequence $(c_n)_{n=t}^{2m+t}$, and let $f = \sum_{n \geq 0} c_n z^n$ denote the ordinary generating function of $\mathcal{C}$. Then the following are equivalent. 
\begin{enumerate}
\item The sequence $\mathcal{C}$ has moment rank $r$. 
\item $\mathcal{C}$ satisfies a simple $r$-th order linear recurrence, and $r$ is the smallest positive integer so that $\mathcal{C}$ has this property.
\item The matrices $H_{m,t}$ satisfy $\det(H_{r-1,t})\neq 0$, $\nul(H_{r,0}) = 1$, $\ker(H_{r,t}) = \ker(H_{r,0})$ for every $t \geq 0$, and $\ker(H_{r,0})\not\subseteq \nabla_r$.
\item There exist $\{\alpha_1,\ldots,\alpha_r\}, \{\beta_1,\ldots,\beta_r\}, \{\lambda_1,\ldots,\lambda_r\} \subset \mathbb{C} \setminus \{0\}$ so that, for each $t \geq 0$, the polynomial $\sum_{j=0}^{2r} \binom{2r}{j} c_{j+t} x^{2r-j} y^{j} = \sum_{j=1}^r \lambda_j (\beta_j/\alpha_j)^t (\alpha_j x + \beta_j y)^{2r}$ and $\{\alpha_j/\beta_j\}_{j=1}^r$ is a set of $r$ distinct values.
\item The ordinary generating function $\Phi(z) = \sum_{n \geq 0} c_n z^n$ of $\mathcal{C}$ is a rational function with exactly $r$ simple poles.
\item The infinite Hankel matrix $H_\infty$ has rank $r$ and admits a non-degenerate Vandermonde decomposition. 
\item The ideal $R_{\mathcal{C}}$ is radical, and the least degree of any nonzero element is $r$.
\item The sequence $\cC$ is the moment sequence for a complex $r$-atomic measure on $\mathbb{C}$. 
\item The algorithm $\malg(\mathcal{C})$ returns the parameter $r$.
\end{enumerate}
\end{theorem}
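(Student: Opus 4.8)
The plan is to show that each of conditions (2)--(9) is equivalent to (1), using as a common hinge what one may call the \emph{Prony data} of $\mathcal C$: distinct nonzero $\beta_1,\dots,\beta_r$ and nonzero $\alpha_1,\dots,\alpha_r$ with $c_n=\sum_{i=1}^r\alpha_i\beta_i^{n+1}$ for all $n$. By Lemma~\ref{lem:GDCisMeaningful} any two such lists coincide as sets and in particular have the same length, so ``$\mathcal C$ has moment rank $r$'' means simply ``a length-$r$ Prony list exists,'' and the ``smallest'' in Definition~\ref{Def:mrank} is automatic. Thus for each condition it suffices to build the object it describes from a length-$r$ Prony list, and to extract a length-$r$ Prony list from that object. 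I would start with (1)$\Leftrightarrow$(2): a Prony list makes $\mathcal C$ satisfy the recurrence with characteristic polynomial $\prod_i(x-\beta_i)$, which is simple; conversely, if $\mathcal C$ satisfies a simple recurrence, Corollary~\ref{Cor:SimpleGenerator} makes the generator $p$ of $R_{\mathcal C}$ squarefree and Lemma~\ref{Lem:SimpleHelp} makes its roots distinct and nonzero, so the general-solution expansion $c_n=\sum_i\gamma_i\beta_i^{n}$ over these roots---with every $\gamma_i\neq0$, else a shorter recurrence violates minimality---is a Prony list of length $\deg p=\rrank(\mathcal C)$, which Corollary~\ref{Cor:SimpleGenerator} identifies with the least simple order. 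This also settles (7): $R_{\mathcal C}=(p)$ is radical iff $p$ is squarefree, and its least nonzero degree is $\deg p=\rrank(\mathcal C)$, so (7) is precisely the statement that $p$ is squarefree of degree $r$.

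For the generating-function and Hankel conditions I would use Lemmas~\ref{Lem:VandeComp}--\ref{Def:no0Vande}. A Prony list gives $\Phi(z)=\sum_i\alpha_i\beta_i/(1-\beta_i z)$, a rational function with exactly $r$ simple poles $1/\beta_i$; conversely a partial-fraction expansion of such a $\Phi$ reads off a length-$r$ list, giving (1)$\Leftrightarrow$(5). The same list gives $H_{m,t}=V^{T}D_tV$ with $V$ the $r\times(m+1)$ Vandermonde in the $\beta_i$ (entries all nonzero) and $D_t=\diag(\alpha_i\beta_i^{t+1})$ invertible; since $V^{T}$ is injective and $D_t$ invertible, $\ker H_{r,t}=\ker V$ does not depend on $t$, the leading minor $\det H_{r-1,t}=\det(V)^2\prod_i\alpha_i\beta_i^{t+1}\neq0$, so $\nul H_{r,0}=1$, and a spanning kernel vector is proportional to the coefficient vector of $\prod_i(x-\beta_i)$---a degree-$r$ polynomial with distinct roots, hence outside the closed set $\nabla_r$; this is (1)$\Rightarrow$(3), and for $H_\infty$ the same factorization together with its nonvanishing leading $r\times r$ minor is (1)$\Rightarrow$(6). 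Conversely, from (3): the spanning kernel vector of $H_{r,0}$ has nonzero leading coordinate (else $\det H_{r-1,0}=0$), so it defines a degree-$r$ characteristic polynomial, which is squarefree because the vector avoids $\nabla_r$; the hypothesis $\ker H_{r,t}=\ker H_{r,0}$ for all $t$ upgrades the finite system $H_{r,0}v=0$ to the recurrence $\sum_j v_j c_{s+j}=0$ for all $s\geq0$, while $\det H_{r-1,0}\neq0$ forbids a shorter recurrence, so $r$ is the least simple order. From (6): merging repeated nodes and dropping vanishing diagonal entries, I may assume the diagonal matrix is invertible with distinct nonzero nodes, so $c_k=\sum_\ell(e_\ell/\gamma_\ell)\gamma_\ell^{k+1}$ is a Prony list of length $s$, and $s=\rank H_\infty=r$ because its leading $s\times s$ minor equals $\det(V_s)^2\prod_\ell e_\ell\neq0$---computing the rank this way avoids the fact that a full-rank complex matrix can have singular $W^{T}W$.

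Condition (4) I would obtain from the binomial theorem: inserting $c_{j+t}=\sum_i\alpha_i\beta_i^{j+t+1}$ turns $\sum_j\binom{2r}{j}c_{j+t}x^{2r-j}y^{j}$ into $\sum_i\alpha_i\beta_i^{t+1}(x+\beta_i y)^{2r}$, the claimed form after setting $(\alpha_j,\beta_j,\lambda_j)=(1,\beta_j,\alpha_j\beta_j)$; conversely, expanding $\sum_j\lambda_j(\beta_j/\alpha_j)^{t}(\alpha_j x+\beta_j y)^{2r}$ and reading the coefficient of $\binom{2r}{j}x^{2r-j}y^{j}$ gives $c_{j+t}=\sum_i(\lambda_i\alpha_i^{2r})(\beta_i/\alpha_i)^{j+t}$, hence $c_n=\sum_i(\lambda_i\alpha_i^{2r})(\beta_i/\alpha_i)^n$ for every $n$ (each index is $j+t$ for some $0\le j\le2r$), a Prony list of length $r$ with distinct nonzero bases $\beta_i/\alpha_i$. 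For (8), a Prony list is exactly the moment sequence of the complex $r$-atomic measure $\sum_i(\alpha_i\beta_i)\delta_{\beta_i}$ on $\mathbb C$; conversely the $n$-th moment of $\sum_\ell e_\ell\delta_{a_\ell}$ is $\sum_\ell e_\ell a_\ell^n$, which is a Prony list provided no atom lies at the origin---an origin atom would affect only $c_0$, the one delicate point, which is resolved through (5). Finally, for (2)$\Leftrightarrow$(9) I would trace $\malg$: by (1)$\Leftrightarrow$(2), (1)$\Rightarrow$(3), Corollary~\ref{Cor:SimpleGenerator}, and Lemma~\ref{Lem:UniqueRecurrence}, each $s<r$ is skipped (Step~5 fails since $\rrank(\mathcal C)=r$ admits no order-$s$ recurrence), at $s=r$ the linear system in Step~4 returns exactly the monic minimal recurrence, Step~6 passes because it is simple, and Step~7 returns $r$; moreover $\malg$ throws \texttt{ErrorNotSimple} exactly when the generator of $R_{\mathcal C}$ has a repeated root, i.e.\ when $\mathcal C$ has no finite moment rank.

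The step I expect to be the main obstacle is (3): one must fix consistent index conventions linking the Hankel kernel, the characteristic polynomial, and the coefficient vector cutting out $\nabla_r$; argue the spanning kernel vector has nonzero leading coordinate; and, above all, verify that the stabilization $\ker H_{r,t}=\ker H_{r,0}$ really does promote the finitely many relations $H_{r,0}v=0$ to a recurrence valid for all shifts, while using that $\nabla_r$---though defined as a closure---still excludes the relevant degree-$r$ coefficient vectors with distinct roots. The rank bookkeeping in (6) (and the need to sidestep the singular-$W^{T}W$ pitfall over $\mathbb C$), together with the origin-atom case in (8), are lesser but genuine subtleties.
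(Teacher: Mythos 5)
Your proposal is correct at the same level of rigor as the paper, but it is organized along a genuinely different decomposition: you make condition (1) the hub of a star, reducing every other condition to building or extracting a length-$r$ ``Prony list'' (justified once and for all by Lemma~\ref{lem:GDCisMeaningful}), whereas the paper works through a web centered largely on condition (2) and the recurrence ideal ($2\Leftrightarrow 7$, $2\Rightarrow 5\Rightarrow 4\Rightarrow 2$, $5\Rightarrow 7$, $3\Leftrightarrow 2$, with (1), (6), (8), (9) hung off of these). Your layout buys uniformity and makes the minimality issues automatic via the uniqueness lemma; the paper's layout gets the recurrence machinery once from $R_{\mathcal{C}}$ and reuses it. Two local arguments genuinely differ, in your favor: for $\rank(H_\infty)=r$ in (6) you evaluate the leading principal minor as $\det(V_r)^2\prod_i \alpha_i\beta_i \neq 0$, where the paper invokes Frobenius's rank inequality (your remark about singular $W^TW$ over $\mathbb{C}$ is exactly the pitfall that forces the paper's detour); and in $(6)\Rightarrow(1)$ you merge repeated nodes and drop zero weights, while the paper instead argues that $\rank(H_\infty)=r$ forces $D$ invertible and the nodes distinct---both work. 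Your choice of measure $\sum_i (\alpha_i\beta_i)\delta_{\beta_i}$ in (8) makes $\mathcal{C}$ literally the moment sequence, tidier than the paper's shifted-index phrasing. The remaining ingredients (Hankel factorization via Lemma~\ref{Lem:VandeComp}, partial fractions for (5), the binomial computation for (4), squarefreeness of the generator for (7), Lemma~\ref{Lem:UniqueRecurrence} and the trace of $\malg$ for (9)) match the paper's. One caution: the corner cases you flag are real but are glossed over by the paper as well --- the converse of (5) tacitly needs $\Phi$ to have no polynomial part, the converse of (8) needs no atom at the origin (your ``resolved through (5)'' does not actually repair this; it is a defect of the statement's degenerate cases, as is the reliance on Lemma~\ref{Lem:SimpleHelp}(2) to exclude zero roots) --- so your treatment is no weaker than the paper's there, and in (8) you at least notice the issue.
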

\begin{proof}
\hfill
\begin{description}
\item[$2 \Leftrightarrow 7$:] Suppose $7$. Let $p(x)$ be a monic generator of $R_{\mathcal{C}}$, which exists because $\mathbb{C}[x]$ is a PID. We know that $p$ is the characteristic polynomial for a linear recurrence of minimal order satisfied by $\mathcal{C}$. Note that $\deg(p) = r$. Let $\{\beta_i\}_{i=1}^s$ be the distinct roots of $p$, and let $m$ be the maximum multiplicity of a root of $p$. Consider the polynomial $q(x) = \prod_{i=1}^s (x - \beta_i)^m$. Clearly $p$ divides $q$, giving that $q(x)\in R_{\mathcal{C}}$. Since the ideal is radical, we have that $\sqrt[m]{q(x)} = \prod_{i=1}^s (x-\beta_i)$ is an element of $R_{\mathcal{C}}$. Thus, $\deg(\sqrt[m]{q(x)}) \leq \deg(p(x))$, and $p(x)$ generating the recurrence ideal implies $\deg(\sqrt[m]{q(x)}) = \deg(p(x))$, so $p(x)$ has distinct roots.

\item We prove the reverse direction by contraposition. Let $p(x)$ be the generator of $ R_{\mathcal{C}}$ and suppose $R_{\mathcal{C}} = \langle p(x) \rangle$ is not radical. Let $f(x)\in R_{\mathcal{C}}$ and $m \geq 2$ be an integer so that $\sqrt[m]{f(x)}$ is an element of $\mathbb{C}[x]\setminus R_{\mathcal{C}}$. Since $R_{\mathcal{C}}$ is principal, we have that $p$ divides $f$, but $p$ does not divide $\sqrt[m]{f}$. Since the distinct roots of $f$ and $\sqrt[m]{f}$ are the same, we have that $p$ does not have distinct roots. Since $p$ does not have distinct roots, the same can be said of every polynomial in $R_{\mathcal{C}}$ and so $\mathcal{C}$ does not satisfy a simple linear recurrence.

\item[$2 \Rightarrow 5 \Rightarrow 4$:] We adapt an argument from \cite{Rez2013}, ultimately drawing upon Sylvester's legendary manuscript \cite{Syl1851}.  Suppose $(2)$, giving that for $\mathcal{C}$ we have $\sum_{n=0}^r a_n c_{n+t} = 0$ for $t\geq0$, where $r$ is the smallest order simple recurrence the sequence satisfies. By Lemma (\ref{Lem:SimpleHelp}), the polynomial $g(x) = \sum_{n=0}^r a_n x^{r-n}$ has no repeated roots, and $a_r\neq 0$. Define $h(x,y) = \sum_{n=0}^r a_nx^{r-n}y^{n}$. Without loss of generality let $a_r = 1$. Let $\alpha_n$ and $\beta_n$ be complex numbers for $1\leq n\leq r$ so that $h(x,y) = \prod_{n=1}^r (-\beta_n x+\alpha_n y)$. Note that the $\frac{\alpha_n}{\beta_n}$ are distinct since $h(x,1) = g(x)$ has distinct roots. 

Let $\Phi(T) = \sum_{m=0}^\infty c_m T^m$. Then we have the following, which converges within a positive-radius disk about zero:
$$ \left(\sum_{n=0}^r a_{r-n}T^n\right)\Phi(T) = \sum_{j=0}^{r-1}\sum_{k=0}^j a_{r-(j-k)}c_k T^j + \sum_{j=r}^\infty \sum_{k=0}^r a_{r-k}c_{j-k}T^j $$
In the second term above, we have $j-r\geq 0$. Therefore, $\sum_{k=0}^r a_{r-k}c_{j-k} = \sum_{n=0}^r a_n c_{n+(j-r)}$ and the second term vanishes, leaving 
$$ \left(\sum_{n=0}^r a_{r-n}T^n\right)\Phi(T) = \sum_{j=0}^{r-1}\sum_{k=0}^j a_{r-(j-k)}c_k T^j $$
Thus, $\Phi(T)$ is a rational function with denominator $\sum_{n=0}^r a_{r-n}T^n = \sum_{n=0}^r a_{n}T^{r-n} = h(T,1) = \prod_{n=1}^r (\alpha_n-\beta_nT )$. Since the $\frac{\alpha_n}{\beta_n}$ are distinct, this completes the proof of $(5)$. 

Continuing from here, by partial fractions, since the $\frac{\alpha_n}{\beta_n}$ are distinct there exist $\lambda_n$ for $1\leq n\leq r$ so that (choosing numerators with foresight)
$$ \Phi(T) = \sum_{n=1}^r \frac{\lambda_n\alpha_n^{2r+1}}{\alpha_n-\beta_nT} \Rightarrow c_m = \sum_{n=1}^r \lambda_n\alpha_n^{2r}\left(\frac{\beta_n}{\alpha_n}\right)^m.$$ 
The following computation completes the proof of (4), noting that the minimality of $r$ in this setting is due to the construction in $4 \Rightarrow 2$ (see below). 
\begin{align*}
\sum_{j=0}^{2r} \binom{2r}{j} c_{j+t} x^{2r-j} y^{j} &= \sum_{n=1}^r \lambda_n\alpha_n^{2r} \left(\frac{\beta_n}{\alpha_n}\right)^{t} \sum_{j=0}^{2r} \binom{2r}{j}  \left(\frac{\beta_n}{\alpha_n}\right)^{j} x^{2r-j} y^{j} \\
&= \sum_{n=1}^r \lambda_n (\beta_n/\alpha_n)^t (\alpha_n x + \beta_n y)^{2r}
\end{align*}

\item[$4 \Rightarrow 2$:] Suppose $(4)$, giving that there exist nonzero $\{\alpha_1,\ldots,\alpha_r\}$, $\{\beta_1,\ldots,\beta_r\}$, and $\{\lambda_1,\ldots,\lambda_r\}$ so that, for each $t \geq 0$, the polynomial $\sum_{j=0}^{2r} \binom{2r}{j} c_{j+t} x^{2r-j} y^{j} = \sum_{j=1}^r \lambda_j (\beta_j/\alpha_j)^t (\alpha_j x + \beta_j y)^{2r}$, the set $\{\alpha_j/\beta_j\}_{j=1}^r$ consists of $r$ distinct values, and $r$ is the smallest positive integer for which this property holds. Then for $0\leq j \leq 2r$ we have $$ c_{j+t} = \sum_{j=1}^r \lambda_j (\beta_j/\alpha_j)^t (\alpha_j^{2r-j}\beta_j^{j}) = \sum_{j=1}^r \lambda_j \alpha_j^{2r-j-t} \beta_j^{j+t}.$$ 

Let $h(x,y) = \prod_{n=1}^r (-\beta_n x+\alpha_n y)$. Moreover, let $a_n$ for $0\leq n \leq r$ so that $h(x,y) = \sum_{n=0}^r a_nx^{r-n}y^{n}$. Note that $a_r = \prod_{n=1}^r \alpha_n$. Since $\alpha_n\neq0$ for all $1\leq n\leq r$, we have that $a_r\neq 0$. Continuing, we have 
\begin{align*}
\sum_{n=0}^r a_n c_{n+t} &= \sum_{j=1}^r \sum_{n=0}^r a_n \lambda_j \alpha_j^{2r-n-t} \beta_j^{n+t} \\
& = \sum_{j=1}^r \lambda_j \alpha_j^{r-t} \beta_j^{t} \sum_{n=0}^r a_n  \alpha_j^{r-n} \beta_j^{n} \\
&= \sum_{j=1}^r \lambda_j \alpha_j^{r-t} \beta_j^{t} h(\alpha_j,\beta_j) = 0. 
\end{align*}
Therefore, the sequence satisfies an $r$-th order linear recurrence. Notice that the recurrence is simple since $h(x,1) = \prod_{n=1}^r (-\beta_n x+\alpha_n )$ has distinct, nonzero roots. Moreover, the minimality of $r$ in this setting is due to the construction in the preceding argument. 
\item[$5 \Rightarrow 7$:] Suppose $(5)$. Let $\Phi(z) = \sum_{n=0}^\infty c_n z^n$. Let the $r$ simple poles of $\Phi(z)$ be $\gamma_n$ for $1\leq n\leq r$ so that $g(z) = \prod_{n=1}^r (z-\gamma_n)$ is the denominator of $\Phi(z)$. It is a well-known result \cite{EvePooShpWar2003} that the polynomial $f(z) = \sum_{i=0}^m a_i z^i$ is the characteristic polynomial of a recurrence satisfied by $\mathcal{C}$ if and only if $f(z)\Phi(z)$ is a polynomial. Thus $f(z)\in R_{\mathcal{C}}$ if and only if $g(z)$ divides $f(z)$, giving that $g(z)$ (a polynomial of degree $r$) is a generator for $R_{\mathcal{C}}$. The distinctness of the roots of $g(z)$ implies $R_{\mathcal{C}}$ is radical. 

\item[$1 \Leftrightarrow 2$:] Suppose (1), meaning there exists a set of nonzero complex numbers $\{\alpha_i\}_{i=1}^r$ and distinct nonzero $\{\beta_i\}_{i=1}^r$ so that $c_n=\sum_{i=1}^r \alpha_i \beta_i^{n+1}$ for all $n\geq 0$. We multiply both sides of the equation by $z^n$ and take the sum over $n$ to examine the ordinary generating functions of both sides. We have the following. 
$$ \Phi(z) = \sum_{n=0}^\infty \sum_{i=1}^r \alpha_i \beta_i^{n+1} z^n = \sum_{i=1}^r \sum_{n=0}^\infty \alpha_i \beta_i^{n+1} z^n  = \sum_{i=1}^r  \frac{\alpha_i \beta_i}{1 - \beta_i z} 
$$ 
Thus, if we let $g(z) = \prod_{i=1}^r (1-\beta_i z) = \sum_{i=0}^r a_i z^i$ for some coefficients $a_i$, then $\Phi(z)g(z) = \sum_{i=0}^{r-1} \lambda_i z^i$ is a polynomial of degree $r-1$ where $\lambda_{r-1} \neq 0$, i.e.,
$$
\sum_{i=0}^{r-1} \lambda_i z^i = \sum_{n=0}^\infty c_n z^n g(z) = \sum_{n=0}^\infty c_n z^n \sum_{i=0}^r a_i z^i = \sum_{n=r}^\infty z^n \sum_{i=0}^r a_i c_{n-i} + \sum_{n=0}^{r-1} z^n \sum_{i=0}^n a_i c_{n-i}.
$$
By matching coefficients of $z^n$ on each side, we see, for $n \geq r$,
$$
0 = \sum_{i=0}^r a_i c_{n-i}.
$$
But, $a_0 = 1$, so
\begin{equation} \label{eq:actualrecurrence}
c_n = - \sum_{i=1}^r a_i c_{n-i},
\end{equation}
i.e., if $\boldsymbol{\alpha} = (a_r,a_{r-1},\ldots,a_1)$, we have $\bC \boldsymbol{\alpha}^T = - \mathbf{c}_r^T$,
where $\mathbf{c}_t := (c_{t}, \cdots , c_{t+r-1})$ and $\mathbf{C} = (\mathbf{c}_{0}, \cdots , \mathbf{c}_{r-1})^T$. So $\boldsymbol{\alpha}^T = - \bC^{-1} \mathbf{c}_r^T$, and

$q(x) = \sum_{i=0}^{r} a_i x^{r-i}$ is the characteristic polynomial of the recurrence (\ref{eq:actualrecurrence}).  Since the recurrence is solved by $c_n = \sum_{i=1}^r \alpha_i \beta_i^{n+1}$, the roots of $p(x)$ are the distinct nonzero values $\{\beta_i\}_{i=1}^r$. Moreover, since each $\beta_i\neq 0$, we have that $a_r\neq 0$. Thus, $\mathcal{C}$ satisfies a simple $r$-th order linear recurrence, and the minimality of $r$ in this setting is given by the construction in the other direction of the proof (below) together with Lemma \ref{lem:GDCisMeaningful}. 
\item Suppose $(2)$, giving that 
$\sum_{n=0}^r a_n c_{n+t} = 0$ for all $t\geq0$, and $r$ is the smallest order recurrence the sequence satisfies. 
By Lemma \ref{Lem:SimpleHelp}, let $\{\beta_i\}_{i=1}^r$ be the distinct nonzero roots of $p(x)$, the characteristic polynomial of the recurrence. It follows from standard facts about rational functions that $p(x)$ having distinct roots implies $c_n = \sum_{i=1}^r \alpha_i \beta_i^{n+1}$ for all $n\geq 0$ where each $\alpha_i \in \mathbb{C}$. If any of the $\alpha_i$ are zero, the construction in the other direction of the proof would contradict the minimality of $r$ in this direction. Therefore all $\alpha_i$ are nonzero and $\mathcal{C}$ has moment rank $r$.

\item[$3 \Leftrightarrow 2$:] Suppose $(2)$, giving that 
$\sum_{n=0}^r a_n c_{n+t} = 0$ for all $t\geq0$ with $a_r = 1$, and $r$ is the smallest order recurrence the sequence satisfies. The $r$-th order linear recurrence implies $H_{r,t}A = 0$, where $A = [a_0,a_1,\dotsc,a_r]^T$. Let $M$ be the $(r+1) \times (r+1)$ matrix given by $M_{i,j} = 1$ if $j = i+1$, $M_{r+1,j} = -a_{j-2}-a_{j-1}$ for $1 \leq j \leq r+1$ where we define $a_{-1} = 0$, and $M_{i,j} = 0$ otherwise.  Then $M$ is invertible, because $M_{r+1,1} \neq 0$, the $(r+1,1)$-minor of $M$ is $1$, and $M (c_t,\ldots,c_{r+t})^T = (c_{t+1},\ldots,c_{r+t+1})^T$ because $M$ acts as a left-shift on the first $r$ coordinates, and the $r+1$-st coordinate is given by
\begin{align*}
\sum_{j=1}^{r+1} -(a_{j-1}+a_{j-2}) c_{j+t-1} & = \sum_{j=1}^{r+1} -a_{j-2} c_{j+t-1} - \sum_{j=1}^{r+1} a_{j-1} c_{j+t-1} \\
& = a_{r} c_{r+t+1} - \sum_{n=0}^{r} a_{n} c_{n+t+1} - \sum_{n=0}^{r} a_{n} c_{n+t} \\
& = c_{r+t+1}.
\end{align*}
Therefore, $H_{r,t} = M^k H_{r,t-k}$ for any integers $t \geq \max\{0,k\}$, so $\ker(H_{r,t}) = \ker(H_{r,t'})$ for every $t,t' \geq 0$. Suppose that there exists a non-trivial vector $B = [b_0,b_1,\dotsc,b_r]^T$ contained within $\ker(H_{r,t})$. Then $\sum_{n=0}^r b_n c_{n+t} = 0$ for all $t\geq0$, and Lemma \ref{Lem:UniqueRecurrence}, gives that $B$ is a scalar multiple of $A$. Thus, $\nul(H_{r,t})= 1$.  The polynomial $p(x) = \sum_{i=0}^r a_i x^{r}$ has no repeated roots, implying that $\ker(H_{r,t}) \not\subseteq \nabla_r$.

Finally, the minimality of $r$ gives that $\nul(H_{m,t}) = 0$ for all $1\leq m\leq r-1$, implying that $\det(H_{r-1,t}) \neq 0$ for all $t\geq 0$. 

\item Suppose $(3)$. There exists $A = [a_0,a_1,\dotsc,a_r]^T$ so that $A\in\ker(H_{r,t})$ for all $t\geq0$. Moreover, $(a_0,\dotsc,a_r) \notin \nabla_r$, so the polynomial $a(x) = \sum_{i=0}^r a_i x^i$ has distinct complex roots. Let the first row of $H_{r,t} = \textbf{c}_t = [c_{t},\dotsc,c_{t+r}]$. Then $\textbf{c}_t \cdot A = 0$, giving that $0 = \sum_{i=0}^r a_i c_{i+t}$ for all $t\geq 0$, and the original sequence satisfies an $r$th order linear recurrence. The minimality of $r$ is given by $\det(H_{r-1,t})\neq 0$ for all $t\geq 0$, completing the proof. 

\item[$1 \Leftrightarrow 6:$] Suppose (6). Let $H_\infty$ (with entries from the sequence $\mathcal{C}$) have rank $r$ and admit a non-degenerate Vandermonde decomposition. Let $\{a_i\}_{i=1}^r$ and $\{b_i\}_{i=1}^r$ be sets of complex scalars so that $D_{r\times r} = \diag(b_i)$, 
$$
V_{r\times\infty} = \begin{pmatrix}
1 & a_1 & a_1^2 & \cdots & a_1^{n-1} & \cdots \\
1 & a_2 & a_2^2 & \cdots & a_2^{n-1} & \cdots \\
\vdots & \vdots & \vdots & \ddots & \vdots & \cdots \\
1 & a_{r} & a_{r}^2 & \cdots  & a_r^{n-1} & \cdots
\end{pmatrix}_,
$$
and $H_\infty = V^TDV$. Since $\rank(H_\infty) = r$, $\rank(V) \leq r$, and $\rank(D) \leq r$, we have that $\rank(V) = \rank(D) = r$ and furthermore that $b_i\neq 0$ for $1\leq i\leq r$. By the computation given in Lemma \ref{Lem:VandeComp}, we see that 
\begin{align*}
\begin{pmatrix}
c_0 & c_1 & \cdots & c_{n-1} & \cdots \\
c_1 & c_2 & \cdots & c_{n} & \cdots \\
\vdots & \vdots & \ddots & \vdots & \cdots \\
c_{n-1} & c_{n+1} & \cdots & c_{2n-2} & \cdots \\
\vdots & \vdots & \vdots & \vdots & \ddots
\end{pmatrix} &= H_\infty = V^TDV \\
&= \begin{pmatrix}
\sum_{i=1}^r b_i & \sum_{i=1}^r b_i a_i & \cdots & \sum_{i=1}^r b_i a_i^{n-1} & \cdots \\
\sum_{i=1}^r b_i a_i & \sum_{i=1}^r b_i a_i^2 & \cdots & \sum_{i=1}^r b_i a_i^{n} & \cdots \\
\vdots & \vdots & \ddots & \vdots & \cdots \\
\sum_{i=1}^r b_i a_i^{n-1} & \sum_{i=1}^r b_i a_i^n & \cdots & \sum_{i=1}^r b_i a_i^{2n-2} & \cdots \\
\vdots & \vdots & \vdots & \vdots & \ddots
\end{pmatrix}_.
\end{align*}
Thus, equating the entries of the two representations of $H_\infty$ yields $c_n = \sum_{i=1}^r b_i a_i^{n}$ for all $n\geq 0$. Using the transformation $\beta_i = a_i$ and $\frac{b_i}{\beta_i} = \alpha_i$ for $1\leq i\leq r$, we obtain the desired form. Note that this transformation is well-defined since $a_i\neq 0$ is a consequence of the definition of a non-degenerate Vandermonde decomposition. Moreover, all $a_i = \beta_i$ are distinct, since $a_i = a_j$ for $i\neq j$ implies the $i$th and $j$th rows of $V$ are equal, contradicting $\rank(V) = r$. 

Suppose (1) by letting $\mathcal{C}$ have moment $r$. Then there exists a set of nonzero complex numbers $\{\alpha_i\}_{i=1}^r$ and distinct nonzero $\{\beta_i\}_{i=1}^r$ so that $c_n=\sum_{i=1}^r \alpha_i \beta_i^{n+1}$ for all $n$. We see from the computation in Lemma \ref{Lem:VandeComp} that $D = \diag(\alpha_i\beta_i)$ and the $(i,j)$ entry of $V_{r\times\infty}$ given by $\beta_i^{j-1}$ is a non-degenerate Vandermonde decomposition of $H_\infty$. 

It only remains to show that $H_\infty$ has rank $r$. 
It is immediately clear that $\rank(H_\infty)\leq r$. Moreover, Frobenius's inequality \cite[0.4.5(e)]{HorJoh2013} implies that for the product $V^TDV$, 
$$
\rank(V^TD)+\rank(DV)\leq \rank(D)+\rank(V^TDV).
$$ 
We show now that $\rank(V^TD) = r$. Suppose that $\rank(V^TD)<r$. Then, not all columns of $V^TD$ are linearly independent, and there exists $\{\gamma_i\}_{i=1}^{r-1} \subset \mathbb{C}$ so that $\sum_{i=1}^{r-1} \gamma_i C_i = C_r$, where $C_i$ denotes the $i$th column of $V^TD$. By examining the entries within the columns of $V^TD$, we see that $\sum_{i=1}^{r-1} \gamma_i C_i = C_r$ implies $\sum_{i=1}^{r-1} \gamma_i\alpha_i\beta_i^{n+1} = \alpha_r\beta_r^{n+1}$ for all $n\geq 0$, leading to the following representation of $c_n$. 
$$
c_n = \sum_{i=1}^r \alpha_i \beta_i^{n+1} = \sum_{i=1}^{r-1} (1+\gamma_i)\alpha_i\beta_i^{n+1}
$$

This implies $\mathcal{C}$ has description complexity at most $r-1$, contradicting Lemma \ref{lem:GDCisMeaningful}. Therefore $\rank(V^TD) = r = \rank(DV)$, and Frobenius's inequality produces the desired result.  (It is also possible to argue this via Sylvester's Law of Inertia applied to the leading principal $r \times r$ submatrices of $H_\infty$, $V$, and $D$.)

\item[$1 \Leftrightarrow 8$:] It is clear that $c_n = \sum_{i=1}^r \alpha_i\beta_i^{n+1}$ is the $(n+1)$-st moment of the $r$-atomic complex measure $\mu = \sum_{i=1}^r \alpha_i \delta_{\beta_i}$, where $\delta_{\beta_i}$ denotes the standard Dirac delta function.

\item[$1 \Leftrightarrow 9$:] We argue that the algorithm returns the parameter $r$ if and only if $r$ is the smallest positive integer so that every term of the sequence $(c_n)$ can be expressed as $c_n = \sum_{i=1}^r \alpha_i \beta_i^{n+1}$ for a set of nonzero complex $\alpha_i$ and distinct $\beta_i$ (since $\alpha_i =0$ or $\beta_i = \beta_j$ for $i \neq j$ contradict the minimality of $r$). 

Suppose the algorithm succeeds.  Then $(c_n)_{n \geq 0}$ satisfies the recurrence $\sum_{n=0}^r a_n c_{n+t} = 0$, where the $\{a_n\}$ are defined by step 4; this is well-defined because step 3 says that $\det(\mathbf{C})\neq 0$.  The characteristic polynomial of the recurrence $p(x)$ is well-defined and factors into $r$ distinct linear factors $x-\beta_i$ by step 6.  Thus, by standard results in the theory of linear recurrence relations,
$$
c_n = \sum_{i=1}^r \alpha_i \beta_i^{n+1}
$$
for some $\{\alpha_i\}_{i=1}^r \subset \mathbb{C}$, which are nonzero by the minimality of $r$ (since $\malg(\mathcal{C})$ did not terminate on any $r' < r$).  So, $\mrank(\mathcal{C}) = r$.

Now, suppose that $c_n = \sum_{i=1}^r \alpha_i\beta_i^{n+1}$ with all distinct nonzero $\beta_i$ and nonzero values $\alpha_i$.  By $1 \Rightarrow 3$ above, the matrix $\mathbf{C} = H_{r-1,0}$ introduced in step 2 is invertible, so we pass step 3.  Write $\Phi(z) = \sum_{n=0}^\infty c_n z^n$.  Because $\max_i |\beta_i| < \infty$, the following is true in a sufficiently small ball about $z=0$:
\begin{align*}
\Phi(z) &= \sum_{n = 0}^\infty \sum_{i=1}^r z^n \alpha_i\beta_i^{n+1} \\
&= \sum_{i = 1}^r \frac{\alpha_i\beta_i}{1 - \beta_i z}.
\end{align*}
Thus, if we let $g(z) = \prod_{i=1}^r (1-\beta_i z) =: \sum_{i=0}^r a_{r-i} z^i$, then $\Phi(z)g(z)$ is a polynomial of degree $r-1$, i.e.,
\begin{align*}
\Phi(z)g(z) & = \sum_{n=0}^\infty c_n z^n g(z) = \sum_{n=0}^\infty c_n z^n \sum_{i=0}^r a_{r-i} z^i \\
& = \sum_{n=r}^\infty z^n \sum_{i=0}^r a_{r-i} c_{n-i} + \sum_{n=0}^{r-1} z^n \sum_{i=0}^n a_{r-i} c_{n-i} \\
& = \sum_{n=r}^\infty z^n \sum_{i=0}^r a_{i} c_{i+n-r} + \sum_{n=0}^{r-1} z^n \sum_{i=0}^n a_{i+r-n} c_{i}.
\end{align*}
By matching coefficients of $z^n$ on each side, we see, for $n \geq r$, $0 = \sum_{i=0}^r a_i c_{i+n-r}$, so we pass step 5.  But, $a_r = 1$, so setting $n = r$,
\begin{equation} \label{eq:thisrecurrence}
c_{r} = - \sum_{i=0}^{r-1} a_i c_{i},
\end{equation}
i.e.,
$$
\bC \left [ 
\begin{array}{c} 
a_0  \\
\vdots  \\
a_{r-1}  \\
\end{array} 
\right ] = - \mathbf{c}_r^T
$$
so
$$
\left [ 
\begin{array}{c} 
a_0  \\
\vdots  \\
a_{r-1}  \\
\end{array}  
\right ] = - \bC^{-1} \mathbf{c}_r^T
$$
Thus, $p(x) = \sum_{i=0}^{r} a_{i} x^{i}$ is the characteristic polynomial of the recurrence (\ref{eq:thisrecurrence}).  Since the recurrence is solved by $c_n = \sum_{i=1}^r \alpha_i \beta_i^{n+1}$, the roots of $p(x)$ are the distinct values $\{\beta_i\}_{i=1}^r$, and we pass step 6.  Since Lemma \ref{lem:GDCisMeaningful} implies that the algorithm does not succeed for any $r' < r$, the result follows.
\end{description}
\end{proof}

In condition (4), the quantity $r$ is known as the ``Waring rank'' of this polynomial.  Pratt \cite{Pra19} presents a history and many interesting results about this classical invariant.

Note that, when $\mrank{C}$ is finite, $\malg(\mathcal{C})$ returns $r$ (the order of the recurrence satisfied by $\mathcal{C}$) and a vector $\boldsymbol{\alpha} = (\mathbf{c}_0 \vdm(\boldsymbol{\beta})^{-1})^T$.  Since $c_n = \sum_{i=1}^r \alpha_i \beta_i^{n+1}$ for some nonzero values $\{\alpha_i\}_{i=1}^r$, we may write
$$
\vdm(\boldsymbol{\beta})^T \left [ 
\begin{array}{c} 
\alpha_1  \\
\alpha_2  \\
\vdots  \\
\alpha_r  \\
\end{array} 
\right ] = \left [ 
\begin{array}{cccc} 
\beta_1 & \beta_2 & \cdots & \beta_r \\
\beta_1^2 & \beta_2^2 & \cdots & \beta_r^2 \\
\vdots & \vdots & \ddots & \vdots \\
\beta_1^{r} & \beta_2^{r} & \cdots & \beta_r^{r} \\
\end{array} 
\right ] \left [ 
\begin{array}{c} 
\alpha_1  \\
\alpha_2  \\
\vdots  \\
\alpha_r  \\
\end{array} 
\right ] = \left [ 
\begin{array}{c} 
c_0  \\
c_1  \\
\vdots \\
c_{r-1} \\
\end{array} 
\right ] = \mathbf{c}_0^T.
$$
Since the $\{\beta_i\}$ are distinct when $r$ is minimal,
$\vdm(\boldsymbol{\beta})$ is invertible, so the vector of $\alpha_i$ values equals $(\vdm(\boldsymbol{\beta})^{-1})^T \mathbf{c_0}$, i.e., $\boldsymbol{\alpha} = (\alpha_1,\ldots,\alpha_r)$.

\begin{cor}
For a sequence $\mathcal{C} = (c_n)_{n=0}^\infty$ satisfying a simple $r$-th order linear recurrence for $r$ minimal, 
$$
\nul H_{m,t} = \max\{0,m-r+1\}
$$
\end{cor}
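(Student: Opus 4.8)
The plan is to read $H_{m,t}$ through the non-degenerate Vandermonde decomposition furnished by condition (6) of Theorem \ref{Thm:TFAEforGDC}, reducing the nullity computation to a rank count for a truncated Vandermonde matrix. Since $\mathcal C$ satisfies a simple $r$-th order linear recurrence with $r$ minimal, the equivalences $1\Leftrightarrow 2$ and $1\Leftrightarrow 6$ in Theorem \ref{Thm:TFAEforGDC} give nonzero $\{\alpha_i\}_{i=1}^r$ and distinct nonzero $\{\beta_i\}_{i=1}^r$ with $c_n=\sum_{i=1}^r\alpha_i\beta_i^{n+1}$ for all $n\ge0$. Exactly as in Lemma \ref{Lem:VandeComp}, the $(j,k)$ entry of $H_{m,t}$ is
$$
c_{j+k+t}=\sum_{i=1}^r\alpha_i\beta_i^{j+k+t+1}=\sum_{i=1}^r\bigl(\alpha_i\beta_i^{t+1}\bigr)\beta_i^{\,j}\beta_i^{\,k},
$$
so $H_{m,t}=V_m^{T}D_tV_m$, where $V_m$ is the $r\times(m+1)$ matrix with $(i,j)$ entry $\beta_i^{\,j}$ for $0\le j\le m$ and $D_t=\diag(\alpha_i\beta_i^{t+1})$ is invertible (all $\alpha_i,\beta_i\ne0$).

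Then I would split on the size of $m$. For $m\ge r-1$: the first $r$ columns of $V_m$ form the $r\times r$ Vandermonde matrix $(\beta_i^{\,j})_{1\le i\le r,\,0\le j\le r-1}$, whose determinant $\pm\prod_{i<i'}(\beta_{i'}-\beta_i)$ is nonzero because the $\beta_i$ are distinct; hence $\rank V_m=r$ and $V_m^{T}$ is injective. Since $D_t$ is invertible, $\ker H_{m,t}=\ker(D_tV_m)=\ker V_m$, so $\nul H_{m,t}=(m+1)-r=m-r+1=\max\{0,m-r+1\}$ in this range. For $0\le m\le r-1$ the target value is $0$, i.e. the claim is that $H_{m,t}$ is nonsingular; for $m=r-1$ this is exactly the statement $\det H_{r-1,t}\ne0$ appearing in condition (3), and for smaller $m$ it is the assertion $\nul H_{m,t}=0$ used (via the minimality of $r$) in the proof of the implication $2\Rightarrow3$ in Theorem \ref{Thm:TFAEforGDC}, together with Lemma \ref{Lem:UniqueRecurrence}. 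Assembling the two ranges gives the formula.

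I expect the genuine obstacle to be the regime $m<r-1$. The factorization $H_{m,t}=V_m^{T}D_tV_m$ by itself is not enough there: $V_m$ now has full column rank rather than full row rank, and since $D_t$ has complex (possibly sign-changing) diagonal entries, the Cauchy--Binet expansion
$$
\det H_{m,t}=\sum_{\substack{S\subseteq\{1,\dots,r\}\\ |S|=m+1}}\Bigl(\ \prod_{\substack{k,k'\in S\\ k<k'}}(\beta_{k'}-\beta_{k})\Bigr)^{2}\prod_{k\in S}\alpha_k\beta_k^{t+1}
$$
is a sum whose nonvanishing is not visible term by term. So the argument must route through the minimality of the recurrence order and the uniqueness in Lemma \ref{Lem:UniqueRecurrence} rather than through the decomposition alone, and the careful handling of this small-$m$ case (and of the degenerate corner $m=0$) is where I would expect the bookkeeping to be most delicate.
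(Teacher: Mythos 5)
Your argument for the range $m\ge r-1$ is correct and is essentially the paper's intended route made explicit: the paper's own proof is a two-line assertion that $\rank H_{m,t}=\min\{m+1,r\}$, while you actually exhibit the factorization $H_{m,t}=V_m^TD_tV_m$ with $D_t=\diag(\alpha_i\beta_i^{t+1})$ invertible and $V_m$ of full row rank $r$ when $m+1\ge r$, which gives $\nul H_{m,t}=m+1-r$ and, at $m=r-1$, the nonsingularity $\det H_{r-1,t}\ne 0$ of condition (3). So far this is cleaner than what is printed.

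The genuine gap is exactly where you suspected, and it cannot be closed: for $m<r-1$ the claim $\nul H_{m,t}=0$ is simply false, and the ingredients you cite do not reach it. The minimality of $r$, Lemma \ref{Lem:UniqueRecurrence}, and the kernel argument inside $2\Rightarrow 3$ of Theorem \ref{Thm:TFAEforGDC} only control vectors lying in $\ker H_{m,t}$ \emph{for every shift $t$} (such a vector would furnish a recurrence of order $\le m<r$); they say nothing about a single block $H_{m,t}$ being nonsingular, and the Cauchy--Binet cancellation you worried about really occurs. Concretely, take $r=2$, $\beta_1=1$, $\beta_2=-1$, $\alpha_1=\alpha_2=1$, so $c_n=1+(-1)^{n+1}=(0,2,0,2,\dots)$ with minimal simple order $2$; then $H_{0,0}=[c_0]=[0]$ has nullity $1$, whereas the formula predicts $\max\{0,0-2+1\}=0$. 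Similarly, with $\beta_1,\beta_2,\beta_3$ the cube roots of unity and all $\alpha_i=1$ one gets $r=3$ and $H_{1,0}=\left(\begin{smallmatrix}0&0\\0&3\end{smallmatrix}\right)$, which is singular even though $m=1\le r-1$. So the corollary (and the paper's proof, whose claim that the rank is $m+1$ for all $m<r$ ``since $H_{r,t}$ is invertible'' is unjustified for submatrices and even misstates the index) holds only for $m\ge r-1$; the correct general statement is $\nul H_{m,t}=m+1-\rank H_{m,t}$ with $\rank H_{m,t}\le\min\{m+1,r\}$, equality being guaranteed only when $m+1\ge r$. Your write-up should either restrict to $m\ge r-1$ or record a counterexample rather than try to complete the small-$m$ case.
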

\begin{proof}
The rank of $H_{m,t}$ is equal to the order $r$ of the smallest linear recurrence satisfied by $\mathcal{C}$ for $m \geq r$ and is $m+1$ for $m < r$, since $H_{r,t}$ is invertible.  Therefore, $\nul(H_{m,t}) = \max\{0,m-r+1\}$
\end{proof}

\section{Unitary Rank}

Given a sequence $\cC = (c_n)_{n\geq 0}$ with moment rank $r$, we know that $c_n = \sum_{i=1}^r \alpha_i\beta_i^{n+1}$ for nonzero $\{\alpha_i\}$ and nonzero, distinct $\{\beta_i\}$. It will be useful for some applications to isolate and consider the case when $\alpha_i = 1$ for all $i$, meaning $c_n = \sum_{i=1}^r \beta_i^{n+1}$ for $n \geq 0$. We proceed with the following definition (changing the sequence index for ease of use later).

\begin{definition}\label{Def:urank}
The sequence $\mathcal{C} = (c_n)_{n=1}^N$ (with $N = \infty$ allowed) is said to have \textbf{unitary rank} $r$ if $r$ is the smallest positive integer so that there exists a multiset of nonzero complex values $\{\beta_i\}_{i=1}^r$ so that $c_n=\sum_{i=1}^r \beta_i^{n}$ for all $1 \leq n \leq N$. If $N = \infty$, we write $\urank((c_n)_{n \geq 1})$ for the unitary rank.
\end{definition}

\begin{lemma} \label{lem:DCisMeaningful}
$\urank((c_n)_{n \geq 1})$ is well-defined.
\end{lemma}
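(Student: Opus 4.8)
The plan is to mimic the proof of Lemma~\ref{lem:GDCisMeaningful}; the only genuinely new wrinkle is that the $\beta_i$ now form a \emph{multiset} rather than a set, so multiplicities must be recovered from residues, not merely from the locations of poles.

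First I would suppose, for contradiction, that there are two multisets $\{\beta_i\}_{i=1}^r$ and $\{\beta'_j\}_{j=1}^s$ of nonzero complex numbers with $\sum_{i=1}^r \beta_i^{\,n} = c_n = \sum_{j=1}^s \beta_j^{\prime\,n}$ for every $n \geq 1$. Setting $f(z) = \sum_{n \geq 1} c_n z^n$ and using that $\max_i |\beta_i|$ and $\max_j |\beta'_j|$ are finite, on a sufficiently small disc about $z = 0$ I may interchange each of the finite outer sums with the inner geometric series to obtain
\[
\sum_{i=1}^r \frac{\beta_i z}{1 - \beta_i z} \;=\; f(z) \;=\; \sum_{j=1}^s \frac{\beta'_j z}{1 - \beta'_j z}.
\]
Both sides are rational functions that agree on an open disc, hence they agree as meromorphic functions on $\mathbb{C}$.

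Next I would compare poles together with their residues. Since each $\beta_i \neq 0$, the summand $\dfrac{\beta_i z}{1 - \beta_i z} = \dfrac{-z}{z - 1/\beta_i}$ has a simple pole at $z = 1/\beta_i$ with residue $-1/\beta_i$; grouping equal values, the left-hand side has a simple pole at each point $1/\beta$ (for $\beta$ a value occurring in $\{\beta_i\}$) with residue $-m_\beta/\beta$, where $m_\beta$ is the multiplicity of $\beta$ in the multiset, and has no other poles. Matching the two pole sets forces the underlying value-sets of $\{\beta_i\}$ and $\{\beta'_j\}$ to coincide; matching the residue at each common pole, $-m_\beta/\beta = -m'_\beta/\beta$, forces $m_\beta = m'_\beta$; and summing the multiplicities gives $r = s$. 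Hence $\{\beta_i\}_{i=1}^r = \{\beta'_j\}_{j=1}^s$ as multisets, so in particular the size $r$ of a realizing multiset is determined by $\mathcal{C}$, and $\urank((c_n)_{n \geq 1})$ is well-defined.

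I do not expect a serious obstacle here: the computation is routine. The only point requiring care — and the reason this is not literally a corollary of Lemma~\ref{lem:GDCisMeaningful} — is that the definition of unitary rank permits repeated $\beta_i$, so one must read the multiplicities off the residues rather than stopping at the set of pole locations.
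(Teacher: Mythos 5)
Your proof is correct and follows essentially the same route as the paper's: both compare the rational function $\sum_i \beta_i z/(1-\beta_i z)$ arising from the generating function, match pole locations to identify the underlying value sets, and use residues to recover multiplicities (your explicit computation of the residue $-m_\beta/\beta$ just makes precise what the paper states as ``residues are proportional to the multiplicity'').
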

\begin{proof}
Suppose, by way of contradiction, that there are two distinct multi-sets $\{\beta_i\}_{i=1}^r$ and $\{\gamma_j\}_{j=1}^s$ of nonzero complex numbers so that  $$c_n=\sum_{i=1}^r\beta_i^{n}=\sum_{j=1}^s\gamma_j^{n}$$ Write $f(z)=\sum_{n=1}^\infty c_n z^n$. Because $\max_i|\beta_i|$ and $\max_j|\gamma_j|$ are finite, the following is true around a sufficiently small ball about $z=0$: 
\begin{align*} 
\sum_{n=1}^\infty \sum_{i=1}^r z^n\beta_i^{n} &= \sum_{n=1}^\infty \sum_{j=1}^s z^n\gamma_j^{n} \\
\sum_{i=1}^r \frac{z\beta_i}{1-z\beta_i} &= \sum_{j=1}^s \frac{z\gamma_j}{1-z\gamma_j}
\end{align*}

These two functions are equal, so they have the same set of poles; thus, as sets, $\{\gamma_i\}_i = \{\beta_i\}_i$.  Furthermore, since the residues of these poles are proportional to the multiplicity of the $\gamma_i$ and $\beta_i$ values, they also occur the same number of times.  Thus, as multisets as well, $\{\gamma_i\}_i = \{\beta_i\}_i$.

\end{proof}

\begin{theorem}\label{Thm:TFAEforDC}
Let $\mathcal{C} = (c_n)_{n=1}^\infty$ be a sequence and $r$ a positive integer. Let $H_{r-1,t}$ denote the $r \times r$ Hankel matrix whose entries come from the sequence $(c_n)_{n=t}^{2r-2+t}$. Lastly, let $\Phi(z) = \sum_{n \geq 1} c_n z^n$ denote the ordinary generating function of the sequence $\mathcal{C}$. Then the following are equivalent. 
\begin{enumerate}
\item The sequence $\mathcal{C}$ has unitary rank $r$. 
\item The algorithm $\malg(T[\mathcal{C}])$ succeeds on the sequence $T [\mathcal{C}] = (c_{n+1})_{n \geq 0}$ and returns $r'$ and $\boldsymbol{\alpha} \in \mathbb{N}^{r'}$ so that $\boldsymbol{\alpha} \cdot \mathbf{1} = r$, where $\mathbf{1}$ is the all-ones vector of dimension $r'$.  
\item $\mathcal{C}$ satisfies an $r'$-th order simple linear recurrence for some $r' \leq r$ with coefficients $\{\alpha_i\}_{i=1}^{r'}$, $\alpha_i \in\mathbb{N}$ for all $1\leq i\leq r'$, and $\sum_{i=1}^{r'}\alpha_i = r$. 
\item Let $\mathcal{C}' = (c'_n)_{n\geq 0}$ be a sequence so that $c'_0 = r$ and $c'_n = c_{n}$ for $n\geq 1$. If $H'_{r-1,t}$ is the $r \times r$ Hankel matrix whose first row consists of $c'_t,\ldots,c'_{t+r-1}$, then there exists an $r\times r$ Vandermonde matrix $V = \vdm(\boldsymbol{\beta})$ so that $H'_{r-1,t} = V^T D^t V$ where $D = \diag(\boldsymbol{\beta})$, for each $t\geq 0$. 
\item Let $\mathcal{C}' = (c'_n)_{n\geq 0}$ be a sequence so that $c'_0 = r$ and $c'_n = c_{n}$ for $n\geq 1$. If $H_\infty'$ is the infinite Hankel matrix with entries from $\mathcal{C}'$, then there exists a Vandermonde matrix $V_{r,\infty}'$ so that $H_\infty' = (V')^TV'$.
\item $\exp(\int -x^{-1} \Phi(x) \, dx)$ is a polynomial of degree $r$ with nonzero roots. 
\item There exist nonzero $r' \in \mathbb{N}$ and $\{\alpha_1,\ldots,\alpha_{r'}\}$, $\{\beta_1,\ldots,\beta_{r'}\}$, and $\{\lambda_1,\ldots,\lambda_{r'}\}$ so that, for each $t \geq 0$, the polynomial $\sum_{j=0}^{2r'} \binom{2r'}{j} c_{j+t} x^{2r'-j} y^{j} = \sum_{j=1}^{r'} \lambda_j (\beta_j/\alpha_j)^t (\alpha_j x + \beta_j y)^{2r'}$,  
and the $\{\alpha_j\}_{j=1}^{r'}$ are positive naturals that sum to $r$. 

\item The sequence $\mathcal{C}$ is the moment sequence for a complex finite-atomic measure on $\mathbb{C}$ where the masses of the atoms are positive naturals with sum $r$. 

\end{enumerate}
\end{theorem}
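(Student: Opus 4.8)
The plan is to leverage Theorem~\ref{Thm:TFAEforGDC} applied to the left shift $T[\mathcal{C}] = (c_{n+1})_{n\geq 0}$, mediated by one bookkeeping observation which I would prove first. Say that $(\star)$ holds (for the given $r$) if $T[\mathcal{C}]$ has finite moment rank $r' \leq r$ and the associated data --- the unique nonzero $\alpha_1,\dots,\alpha_{r'}$ and distinct nonzero $\gamma_1,\dots,\gamma_{r'}$ with $c_m = \sum_{j=1}^{r'}\alpha_j\gamma_j^m$ for all $m\geq 1$ --- consists of positive integers $\alpha_j$ summing to $r$. For $1\Rightarrow(\star)$ (\emph{grouping}): from a size-$r$ multiset $\{\beta_i\}$ of nonzero numbers with $c_m=\sum_{i=1}^r\beta_i^m$, collect repeated values into distinct nonzero $\gamma_j$ with positive integer multiplicities $\alpha_j$ of total $r$; by Lemma~\ref{lem:GDCisMeaningful} these are exactly the moment data of $T[\mathcal{C}]$, so $\mrank(T[\mathcal{C}]) = r' \leq r$. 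For $(\star)\Rightarrow 1$ (\emph{splitting}): replace each atom $\alpha_j\gamma_j^m$ by $\alpha_j$ copies of $\gamma_j^m$, producing a size-$r$ multiset of nonzero nodes, so $\urank(\mathcal{C})\leq r$; equality holds because any shorter unitary representation, regrouped, would by Lemma~\ref{lem:GDCisMeaningful} reproduce the same $\gamma_j,\alpha_j$ and hence have size $\sum_j\alpha_j = r$.

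Given $(\star)$, conditions (2), (3), (7), (8) follow quickly: each is a clause of Theorem~\ref{Thm:TFAEforGDC} for $T[\mathcal{C}]$ --- clauses (9), (2), (4), (8) respectively, each equivalent to $\mrank(T[\mathcal{C}])=r'$ --- together with the stipulation that the mass data that clause produces be positive integers of total $r$. In (8) one uses $\mu=\sum_{j=1}^{r'}\alpha_j\delta_{\gamma_j}$, whose masses are the $\alpha_j$; in (2), the remark following Theorem~\ref{Thm:TFAEforGDC} identifies the vector $\boldsymbol{\alpha}$ returned by $\malg(T[\mathcal{C}])$ with $(\alpha_1,\dots,\alpha_{r'})$; in (3), the existence of a simple order-$r'$ recurrence forces $r'=\mrank(T[\mathcal{C}])$ with closed-form coefficients $\alpha_j$; and (7) is the Waring-rank criterion, where after the routine normalization of the linear forms the exhibited scalars become the moment masses. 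Each is then literally $(\star)$, hence equivalent to (1).

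Conditions (4) and (5) I would treat via the augmented sequence $\mathcal{C}'$ with $c'_0=r$ and $c'_n=c_n$ for $n\geq1$: the value $c'_0=r$ is chosen precisely so that $c'_n=\sum_{i=1}^r\beta_i^n$ holds for every $n\geq0$ exactly when $\mathcal{C}$ has unitary rank at most $r$ witnessed by $\{\beta_i\}$. By the computation of Lemma~\ref{Lem:VandeComp}, the factorizations $H'_{r-1,t}=V^TD^tV$ with $V=\vdm(\boldsymbol{\beta})$, $D=\diag(\boldsymbol{\beta})$ (resp.\ $H'_\infty=(V')^TV'$) encode exactly such a representation, by a non-degenerate (nonzero-node) Vandermonde multiset; that this multiset has size exactly $r=\urank(\mathcal{C})$ then follows by comparing row multiplicities against the unique weighted Vandermonde decomposition of $H'_\infty$ furnished by Lemma~\ref{lem:GDCisMeaningful} together with Theorem~\ref{Thm:TFAEforGDC}(6) applied to $\mathcal{C}'$, the unweighted $r$-row shape forcing the weights to be the positive-integer row multiplicities. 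Finally, $1\Leftrightarrow 6$ is self-contained: with $\Phi(x)=\sum_{n\geq1}c_nx^n$, and fixing the constant of integration so that the value at $0$ is $1$ (legitimate since $-x^{-1}\Phi(x)$ is analytic at $0$), one has $\exp\!\left(\int -x^{-1}\Phi(x)\,dx\right)=\prod_i(1-\beta_i x)$ exactly when $\Phi(x)=\sum_i \beta_i x/(1-\beta_i x)$, i.e.\ $c_n=\sum_i\beta_i^n$; this polynomial has degree $r$ with all roots nonzero iff there are exactly $r$ of the $\beta_i$ and none vanishes, and minimality of $r$ again follows from uniqueness.

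I expect the main obstacle to be the bookkeeping surrounding $(\star)$ --- keeping the two well-definedness lemmas straight and correctly propagating minimality of $r$ through both the grouping and splitting steps --- along with the normalization and nonzero-node technicalities needed to align (4), (5), and (7) with their Theorem~\ref{Thm:TFAEforGDC} counterparts; beyond these, the content added on top of Theorem~\ref{Thm:TFAEforGDC} is light.
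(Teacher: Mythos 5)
Your proposal is correct and takes essentially the same route as the paper: the paper likewise reduces conditions (2), (3), (7), and (8) to Theorem \ref{Thm:TFAEforGDC} applied to the shifted sequence $T[\mathcal{C}]$ via the same grouping/splitting of node multiplicities (using Lemma \ref{lem:DCisMeaningful} for minimality), handles (4) and (5) through the computation of Lemma \ref{Lem:VandeComp} with the augmented sequence $c'_0 = r$, and proves (6) by the same $\exp$--$\log$ generating-function calculation. Your consolidation of the bookkeeping into the single intermediate statement $(\star)$, in place of the paper's pairwise chain of equivalences, is a presentational rather than substantive difference.
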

\begin{proof}

\begin{description}

\item[$1 \Leftrightarrow 3$:] This is clear from the definitions of $\urank$ and $\mrank$, taking into account the shift in index. 

\item[$1\Leftrightarrow 2$:] The sequence $\mathcal{C}$ having unitary rank $r$ implies there exists $r'\leq r$ so that $\mrank(T[\mathcal{C}]) = r'$, since we may take the same $\beta_i$ with $\alpha_i$ equal to the multiplicity of $\beta_i$ in the representation $c_n = \sum_{i=1}^r \beta_i^n$.  Thus, the algorithm applied to $T[\mathcal{C}]$ with Hankel matrix of size $r'$ succeeds and returns $r'$ and $\boldsymbol{\alpha} = (\mathbf{c}_0 \vdm(\boldsymbol{\beta})^{-1})^T$, which, by the note following the proof of Theorem \ref{Thm:TFAEforGDC}, is the vector of coefficients $\alpha_i$ in the representation $c_n = \sum_{i=1}^{r'} \alpha_i \beta_i^{n}$.  (Note that the exponent of $\beta_i$ is $n$ instead of $n+1$ because $T[\mathcal{C}]$ is the input to $\malg$ here.)  But then, the sum $\sum_{i=1}^{r'} \alpha_i$ over the values generated by the algorithm is $r$, since this is the number of terms in the representation $c_n = \sum_{i=1}^r \beta_i^n$.

To establish the converse direction, suppose the algorithm returns $r'$ and $\boldsymbol{\alpha} \in \mathbb{N}^{r'}$ with $\boldsymbol{\alpha} \cdot \mathbf{1} = r$.  Note that none of the $\alpha_i$ are zero, since then $\malg$ would have terminated on a smaller value of $r'$.  Thus, the $\alpha_i$ are positive integers summing to $r$, so we may write
$$
c_{n+1} = \sum_{j=1}^{r'} \sum_{i=1}^{\alpha_i} \beta_j^{n+1},
$$
i.e., $c_n = \sum_{j=1}^r {\beta'_j}^{n}$ if $\{\beta_j\}_{j=1}^{r'} = \{\beta'_j\}_{j=1}^r$ with $\beta_j$ appearing with multiplicity $\alpha_j$ on the right-hand side.

\item[$3\Leftrightarrow 8$:] This is a direct consequence of Theorem \ref{Thm:TFAEforGDC}.

\item[$1 \Leftrightarrow 5$:] Suppose condition $(1)$ holds and let $\{\beta_i\}_{i=1}^r$ be given to satisfy the definition of unitary rank for the sequence $\mathcal{C}$. To adopt the notation of Lemma \ref{Lem:VandeComp}, letting $b_i = 1$ and $a_i = \beta_i$ for $1\leq i\leq r$ proves $(5)$. 

Now, suppose $(5)$ holds. Letting $b_i = 1$ for each $1\leq i\leq r$ in the form of $V^TDV$ given in Lemma \ref{Lem:VandeComp}, we have that the $(i,j)$ entry of $H_\infty'$ is $\sum_{i=1}^r a_i^{i+j-2}$. Clearly then, we have that $c_n' = \sum_{i=1}^r a_i^n$, further implying that $c_n = \sum_{i=1}^r a_i^{n+1}$. The uniqueness of unitary rank given by Lemma \ref{lem:DCisMeaningful} completes the proof. 

\item[$1\Leftrightarrow 6$:] Suppose $(1)$ holds. Let $c_n = \sum_{i=1}^r \beta_i^{n}$. Since $f(x) = \sum_{n\geq 1} c_nx^n$, we have that $\int -x^{-1} f(x) \, dx = -\sum_{n\geq 1} \frac{c_n x^{n}}{n} + C$. We have the following computation.  
\begin{align*}
\exp \left( -\sum_{n\geq 1} \frac{c_n x^{n}}{n} \right) &= \exp \left( - \sum_{n\geq 1} \sum_{i=1}^r \frac{\beta_i^{n} x^{n}}{n} \right) \\
&= \exp \left( - \sum_{i=1}^r \sum_{n\geq 1} \frac{(\beta_ix)^{n}}{n} \right) \\
&= \prod_{i=1}^r \exp \left( - \sum_{n\geq 1} \frac{(\beta_ix)^n}{n} \right) \\
&= \prod_{i=1}^r \exp \left( \log[1-\beta_i x] \right) \\
&= \prod_{i=1}^r (1-\beta_i x)
\end{align*}
Therefore, we see that $\int -x^{-1} f(x) \, dx$ is the log of a polynomial of degree $r$, as desired. 

Suppose $(6)$ holds. Since it is only possible to take the log of a polynomial if the polynomial has nonzero constant term, we have that there exists nonzero $\{\beta_i\}_{i=1}^r$ so that $\exp(-\int x^{-1} f(x) \, dx) = \prod_{i=1}^r (1-\beta_i x)$. By letting $c_n = \sum_{i=1}^r \beta_i$ and working backwards through the computation given in the first direction of the proof shows 
$$
\exp \left( -\int x^{-1} f(x) \, dx \right) = e^C \exp \left(- \sum_{n\geq 1} \frac{c_n x^{n}}{n} \right)
$$
Taking log of both sides and differentiating gives that $f(x) = \sum_{n\geq 1} c_n x^{n}$, showing that $f(x)$ is the generating function for a sequence with unitary rank $r$, completing the proof. 

\item[$1 \Leftrightarrow 7$:] Suppose $(1)$ holds. Having already proved $1\Leftrightarrow 3$, we appeal to $(3)$ and Theorem \ref{Thm:TFAEforGDC}, to give that (despite the shift in index, which is taken care of by the variable $t$) there exist nonzero $\{\alpha_1,\ldots,\alpha_{r'}\}$, $\{\beta_1,\ldots,\beta_{r'}\}$, and $\{\lambda_1,\ldots,\lambda_{r'}\}$ so that, for each $t \geq 0$, the polynomial $\sum_{j=0}^{2r'} \binom{2r'}{j} c_{j+t} x^{2r'-j} y^{j} = \sum_{j=1}^{r'} \lambda_j (\beta_j/\alpha_j)^t (\alpha_j x + \beta_j y)^{2r'}$. The fact that $\sum_{i=1}^{r'} \alpha_i = r$ is given by the assumption of condition $(1)$. 

Now suppose $(7)$ is true. By Theorem \ref{Thm:TFAEforGDC}, we have that $\mrank(\mathcal{C}) = r'$. The assumption in $(7)$ that $\{\alpha_i\}_{i=1}^{r'}$ is a set of positive naturals summing to $r$ gives that $\urank(\mathcal{C}) = r$. 

\item[$1 \Leftrightarrow 4$:] Suppose $(4)$. By the computation given in Lemma \ref{Lem:VandeComp}, it is clear that the $(i,j)$ entry of $V^T D^t V$ is given by $\sum_{i=1}^r \beta_i^{t+i+j-2}$. By letting $c_n = \sum_{i=1}^r \beta_i^n$, we have that $\mathcal{C} = (c_n)_{n=1}^\infty$ has unitary rank at most $r$. We note that Lemma \ref{lem:DCisMeaningful} completes the proof that $\urank(\mathcal{C}) = r$. 

Now, suppose $(1)$. By the computation given in Lemma \ref{Lem:VandeComp}, we see that letting the $i$th row of $V$ be generated by $\beta_i$ and defining $D = \diag(\beta_i)$, the result follows immediately. 

\end{description}

\end{proof}

Suppose $A$ is an infinite positive-semidefinite matrix (aka positive-type kernel for $\ell^2$); then $A$ can be written as $A = M^\ast M$, which we will refer to as a ``Gramian representation'' (since $M^\ast M$ is a Gramian matrix in the finite-dimensional case). In general, this representation is unique up to unitary conjugation. 

\begin{cor}
The property that a real sequence $\mathcal{C}$ is the moment sequence of a finite-atomic measure on $\mathbb{C}$ with integer masses is equivalent to $H'_\infty$ being positive semi-definite of finite rank with a Vandermonde Gramian representation.
\end{cor}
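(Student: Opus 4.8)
The plan is to derive this from Theorem~\ref{Thm:TFAEforDC} together with a short analysis of when the Vandermonde factorization it produces is a bona fide positive-semidefinite Gramian. Throughout, let $r$ denote the prospective total mass, so that $\mathcal{C}' = (c'_n)_{n\ge 0}$ has $c'_0 = r$ and $c'_n = c_n$ for $n \ge 1$, and $H'_\infty$ is the infinite Hankel matrix built from $\mathcal{C}'$; since $\mathcal{C}$ is real and $r \in \mathbb{N}$, all entries of $H'_\infty$ are real, so $H'_\infty$ is a Hermitian operator on $\ell^2$ and asking whether it is positive semidefinite is meaningful. The first step is to record the ``skeleton'' equivalence, which is essentially a restatement of Theorem~\ref{Thm:TFAEforDC}: for our real sequence, being the moment sequence of a complex finite-atomic measure on $\mathbb{C}$ with positive-integer masses summing to $r$ (condition (8)) is equivalent to $\urank(\mathcal{C}) = r$ (condition (1)), which in turn is equivalent to $H'_\infty$ having finite rank $r$ and factoring as $H'_\infty = (V')^T V'$ for some $r \times \infty$ Vandermonde matrix $V' = \vdm(\boldsymbol{\beta})$ with nonzero nodes (condition (5)). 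So the left-hand property of the corollary holds precisely when $H'_\infty$ admits such a ``transpose-Vandermonde'' factorization, and what remains is to see that for our real $\mathcal{C}$ this is the same as $H'_\infty$ being positive semidefinite of finite rank and admitting a Vandermonde Gramian representation $H'_\infty = (V')^* V'$.

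The crux is the claim that a Vandermonde Gramian representation of $H'_\infty$ is automatically positive semidefinite and, because $H'_\infty$ is Hankel, forces its Vandermonde nodes to be real, while conversely real nodes turn the factorization of Theorem~\ref{Thm:TFAEforDC}(5) into such a Gramian. For the forward implication, if $H'_\infty = (V')^* V'$ with $V'$ having nodes $\beta_1,\dots,\beta_r$, then (indexing from $0$) its $(j,k)$ entry is $\sum_i \overline{\beta_i}^{\,j}\beta_i^{\,k}$; reading off the first row gives $c'_m = \sum_i \beta_i^{\,m}$, and comparing with the Hankel condition $(H'_\infty)_{j,k} = c'_{j+k}$ yields $\sum_i \overline{\beta_i}^{\,j}\beta_i^{\,k} = \sum_i \beta_i^{\,j+k}$ for all $j,k \ge 0$. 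Fixing $j$ and grouping equal nodes, the linear independence of the sequences $(\gamma^{\,k})_{k \ge 0}$ over distinct $\gamma$ forces $\overline{\gamma}^{\,j} = \gamma^{\,j}$ for every distinct node $\gamma$, and taking $j = 1$ gives $\gamma \in \mathbb{R}$. For the converse, real nodes make $V'$ a real matrix, so $(V')^T = (V')^*$ and the factorization from Theorem~\ref{Thm:TFAEforDC}(5) is already a Gramian representation, whence $H'_\infty = (V')^*V'$ is positive semidefinite. This is the step I expect to be the main obstacle, since it is the only place where positive semidefiniteness does real work: it upgrades the purely algebraic transpose-Vandermonde factorization of Theorem~\ref{Thm:TFAEforDC} to an honest Gramian and simultaneously confines the support of the underlying atomic measure to $\mathbb{R}$.

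Finally I would translate the node condition back into the language of measures. A real-node factorization $H'_\infty = (V')^T V'$ is recorded by the finite-atomic measure $\mu = \sum_\gamma m_\gamma \delta_\gamma$, where $\gamma$ ranges over the distinct (real) nodes and $m_\gamma \in \mathbb{N}$ is the multiplicity of $\gamma$ among $\beta_1,\dots,\beta_r$; this $\mu$ is supported on $\mathbb{R} \subset \mathbb{C}$, has positive-integer masses summing to $r$, and has moment sequence exactly $\mathcal{C}$, and conversely any such $\mu$ yields a real Vandermonde $V'$. Chaining this with the skeleton equivalence of the first paragraph completes the proof: positive semidefiniteness of $H'_\infty$ is precisely the additional ingredient guaranteeing that the (necessarily unique) atomic representation of $\mathcal{C}$ has all atoms real.
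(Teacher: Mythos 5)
Your central lemma is correct and is actually argued more carefully than in the paper: from a Vandermonde Gramian representation $H'_\infty=(V')^\ast V'$ together with the Hankel structure you deduce, via linear independence of the geometric sequences $(\gamma^k)_{k\ge 0}$, that every node is real; conversely, real nodes make $(V')^T=(V')^\ast$, so the factorization of Theorem \ref{Thm:TFAEforDC}(5) becomes an honest positive-semidefinite Gramian. (The paper's own proof compresses exactly this point into the unexplained assertion ``Thus, $V^T=V^\ast$.'') So your ``$\Leftarrow$'' direction -- PSD of finite rank with a Vandermonde Gramian implies $\mathcal{C}$ is the moment sequence of a finite-atomic measure with positive integer masses, necessarily supported on $\mathbb{R}\subset\mathbb{C}$ -- is solid.

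The gap is in the forward direction, at the sentence claiming that for real $\mathcal{C}$ the transpose-Vandermonde factorization of Theorem \ref{Thm:TFAEforDC}(5) ``is the same as'' a positive-semidefinite Vandermonde Gramian. Realness of the sequence does not make the nodes of that factorization real, so your converse step ``real nodes make $V'$ a real matrix'' has no hypothesis to invoke in this direction. Concretely, let $c_n=i^n+(-i)^n$ for $n\ge 1$: this real sequence is the moment sequence of the finite-atomic measure $\delta_i+\delta_{-i}$ on $\mathbb{C}$ with integer masses, yet with $c'_0=2$ the leading $2\times 2$ block of $H'_\infty$ is $\diag(2,-2)$, which is not positive semidefinite, so no Gramian representation exists. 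Your closing remark that positive semidefiniteness is ``precisely the additional ingredient guaranteeing that \dots all atoms [are] real'' in effect concedes the point: what your argument proves is the equivalence with finite-atomic integer-mass measures supported on $\mathbb{R}$ (a Hamburger-type statement), not with measures on $\mathbb{C}$ as the corollary is worded. In fairness, the paper's statement and two-line proof share this imprecision -- the paper also never shows that the atoms produced by Theorem \ref{Thm:TFAEforDC} are real -- but as a proof of the statement as written, the implication from the measure-theoretic side to positive semidefiniteness is missing, and without restricting the support it is false; the clean fix is to prove (and state) the version with atoms on $\mathbb{R}$, which is exactly what your argument delivers.
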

\begin{proof}
By Theorem 3, $H'_\infty$ has a factorization as $V^T V$ for some Vandermonde kernel $V$.  Note that, if $H'_\infty = M^\ast M$ for some $M$, then $H'_\infty$ is Hermitian and symmetric, which implies that $\mathcal{C}$ is real.  Thus, $V^T = V^\ast$.
\end{proof}

\bibliography{ref}
\bibliographystyle{plain}

\end{document}